 \newtheorem{theorem}{Theorem}[section]
 \newtheorem{corollary}[theorem]{Corollary}
 \newtheorem{lemma}[theorem]{Lemma}
 \theoremstyle{definition}
 \newtheorem{definition}[theorem]{Definition}
 \theoremstyle{remark}
 \newtheorem{remark}[theorem]{Remark}
 \theoremstyle{remark}
 \numberwithin{equation}{section}
\begin{document}
\begin{center}
\Large{\textbf{On graded Brown--McCoy radicals of graded rings}}\let\thefootnote\relax\footnotetext{2010 \emph{Mathematics Subject Classification} 16W50, 16N80\\
\emph{Key words and phrases.} Graded rings and modules,
Brown--McCoy radical}
\end{center}
\begin{center}
\large{\textbf{Emil Ili\'{c}-Georgijevi\'{c}}}
\end{center}
\begin{abstract}
\noindent We investigate the graded Brown--McCoy and the classical
Brown--McCoy radical of a graded ring, which is the direct sum of
a family of its additive subgroups indexed by a nonempty set,
under the assumption that the product of homogeneous elements is
again homogeneous. There are two kinds of the graded Brown--McCoy
radical, the graded Brown--McCoy and the large graded Brown--McCoy
radical of a graded ring. Several characterizations of the graded
Brown--McCoy radical are given, and it is proved that the large
graded Brown--McCoy radical of a graded ring is the largest
homogeneous ideal contained in the classical Brown--McCoy radical
of that ring.
\end{abstract}
\section{Introduction}
Graded radicals and radicals of group-graded rings have been
investigated by many authors. Particularly, the graded
Brown--McCoy and the Brown--McCoy radical (classical) of
group-graded rings have been studied recently, as well as in the
recent decades, regarding many important open problems. For
example, in \cite{lp}, results related to the Brown--McCoy radical
of a ring graded by the additive group of integers are obtained in
order to examine open problems on the Brown--McCoy radical of a
polynomial ring (see also \cite{as}). These are, on the other
hand, related to the famous K\"{o}the's Conjecture (consult also
references of \cite{lp}). In \cite{grz}, $G$-systems are
introduced, where $G$ is a group, and their Brown--McCoy radicals
are investigated, while in \cite{bs}, the graded Brown--McCoy and
the Brown--McCoy radical of $G$-graded rings are studied. For
research related to some problems on monomial rings, see
\cite{jp}. There is, however, a more general notion of a graded
ring and the aim of this paper is to study the graded Brown--McCoy
and the classical Brown--McCoy radical of such rings. Results
dealing with the Jacobson radical of these rings can be found in
\cite{ak2}; see also \cite{ak1} and \cite{ak}.
\begin{definition}[\cite{ak2,ak}]\label{sgris}
Let $R$ be a ring, and $S$ a partial groupoid, that is, a set with
a partial binary operation. Also, let $\{R_s\}_{s\in S}$ be a
family of additive subgroups of $R.$ We say that
$R=\bigoplus_{s\in S}R_s$ is $S$-\emph{graded} and $R$
\emph{induces} $S$ (or $R$ is an $S$-\emph{graded ring inducing}
$S$) if the following two conditions hold:
\begin{itemize}
    \item[$i)$] $R_sR_t\subseteq R_{st}$ whenever $st$ is defined;
    \item[$ii)$] $R_sR_t\neq0$ implies that the product $st$ is
    defined.
\end{itemize}
\end{definition}
\noindent An $S$-graded ring inducing $S$ from above is also
called a \emph{homogeneous sum} \cite{ak2,ak}. However, we will
call it simply a \emph{graded ring} in the sequel. If
$R=\bigoplus_{s\in S}R_s$ is a graded ring, the set
$A=\bigcup_{s\in S}R_s$ is called the \emph{homogeneous part of
$R,$} and elements of $A$ are called \emph{homogeneous elements of
$R.$} The previous definition applies to both associative and
nonassociative rings, but all
rings in this paper are assumed to be associative.\\
Clearly, every usual group (semigroup, groupoid) graded ring is
graded in the above sense. However, there are many other important
examples of such rings. For instance, let $A,$ $B$ be rings and
$V,$ $W$ be an $(A,B)$-bimodule and a $(B,A)$-bimodule,
respectively, and let the quadruple $(A,V,W,B)$
be a Morita context, that is, the set $R=\left(%
\begin{array}{cc}
  A & V \\
  W & B \\
\end{array}%
\right)$ of matrices forms a ring under matrix addition and
multiplication (see, for instance, \cite{gw}). Then
$R=\bigoplus_{(i,j)\in\{1,2\}^2}R_{(i,j)}$ is a graded ring in the
above sense if $R_{(i,j)}$ denotes the set of matrices of $R$
whose all entries are zero except the $(i,j)$ entry which does not
have to be zero. The following are also examples of graded rings
in the above sense: a semidirect sum of rings (see \cite{ak}),
particular case of which is the Dorroh extension of a ring, a ring
which is the direct sum of its left ideals, a path
algebra (see \cite{ak}).\\
A notion of a graded ring, equivalent to that in Definition
\ref{sgris}, was studied in \cite{cha,ha,agg} from a different
point of view. Namely, let $R$ be a graded ring and $A$ its
homogeneous part. Consider $A$ with induced partial addition and
induced multiplication from $R.$ Then $A$ is called an
\emph{anneid} \cite{cha,ha,agg}. Origins of this approach can be
found in \cite{mk}. Anneid $A$ has a partial addition since the
sum of nonzero elements $x,y\in A$ does not have to be a
homogeneous element of $R.$ However, if $x+y$ belongs to $A,$
elements $x$ and $y$ are called \emph{addable} and we write $x\#y$
\cite{cha,ha,agg}. Multiplication is defined everywhere, according
to the very definition of a graded ring. As we will see in the
next section, studying graded rings is equivalent to studying
their corresponding anneids \cite{cha,ha,agg}. We will therefore,
in this paper, conduct research on anneids, all of which are
assumed to be associative.\\
The graded Jacobson radical of an anneid is thoroughly
investigated in \cite{ha1,ha}. Similar studies on $2$-graded rings
can be found in \cite{sw}. Here, we use results of \cite{ha1,ha}
in order to introduce and investigate the \emph{graded
Brown--McCoy radical of an anneid}. Inspired by \cite{ha1,ha}, we
give two notions of this radical, the \emph{graded Brown--McCoy}
and the \emph{large graded Brown--McCoy radical of an anneid}.
Several characterizations of the graded Brown--McCoy radical of an
anneid are given, and as the main result we prove that the large
graded Brown--McCoy radical of an anneid is the homogeneous part
of the largest homogeneous ideal contained in the classical
Brown--McCoy radical of the corresponding graded ring (Theorem
\ref{theorem}).\\ Obtained results can be easily translated to the
language of graded rings, and represent generalizations of results
which hold for usual group-graded rings \cite{grz,bs}.
\section{Preliminaries}
\noindent Here, we give some notions and properties related to
graded rings described above. Everything stated in this section
can be found in more detail in \cite{agg} and references therein
(particularly \cite{cha,ha}) or in \cite{kv,mv}.
\begin{theorem}[\cite{cha,ha,agg}]
Anneid $A$ can be characterized by the following axioms:
\begin{itemize}
    \item[$a1)$] $A$ is a groupoid with respect to multiplication and
possesses an element $0$ for which we have $a0=0=0a$ for all $a\in
A;$
    \item[$a2)$] $0$ is addable with every element of $A,$ and the
relation of addibility is reflexive and almost transitive:
$(\forall a,b,c\in A)\ a\#b\wedge b\#c\wedge b\neq0\Rightarrow
a\#c;$
    \item[$a3)$] for all $0\neq a\in A,$ the set
$A(a)$ of all elements from $A,$ which are addable with $a,$ is a
commutative group with respect to addition induced by that of $A$
and such a group is called the \emph{addibility group of} $a;$
    \item[$a4)$] for all
$a,b,c\in A,$ $a\#b$ implies $ca\#cb,$ $ac\#bc,$ $c(a+b)=ca+cb,$
$(a+b)c=ac+bc.$
\end{itemize}
\end{theorem}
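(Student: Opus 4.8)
The plan is to prove the two implications separately: first that the homogeneous part $A$ of a graded ring $R=\bigoplus_{s\in S}R_s$ (Definition \ref{sgris}) satisfies a1)--a4), and then, conversely, that any set equipped with a partial addition and a total multiplication obeying a1)--a4) arises in this way, so that the axioms genuinely characterize anneids.

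For the forward direction I would work throughout with the uniqueness of the homogeneous decomposition in a direct sum. Axiom a1) is immediate: the product of homogeneous elements is homogeneous by condition $i)$ of Definition \ref{sgris}, so $A$ is closed under multiplication, while $0\in R_s$ for every $s$ provides the absorbing zero. The crucial observation, from which a2) and a3) both follow, is that $R_s\cap R_t=0$ for $s\neq t$, so each nonzero homogeneous element lies in exactly one component; consequently two nonzero elements are addable precisely when they lie in a common $R_s$. Reflexivity and the addability of $0$ are then clear, and almost transitivity falls out because a nonzero middle term $b$ pins both $a$ and $c$ to the component of $b$ (this is exactly why $b\neq0$ cannot be dropped: for $b=0$ both $a\#0$ and $0\#c$ hold vacuously without forcing $a\#c$). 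Axiom a3) amounts to $A(a)=R_s$ whenever $a\in R_s\setminus\{0\}$, which is an additive group, and a4) is the distributive law of $R$ together with the fact that $c(a+b)$ is again homogeneous whenever $a+b$ is.

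The converse is where the real work lies. Starting from a structure $A$ satisfying a1)--a4), I would first show that addability is an equivalence relation on $A\setminus\{0\}$: symmetry comes from commutativity of addition in a3), reflexivity from a2), and transitivity from the almost-transitive clause of a2) (the excluded case of a zero middle term never occurs among nonzero elements). Let $S$ be the set of these classes and, for a class $s$ with representative $a$, put $R_s:=A(a)$; axiom a3) makes this a commutative group, and one checks using a2) that $A(a)=A(a')$ whenever $a\#a'$, so $R_s$ is well defined, and that distinct classes meet only in $0$. I would then form $R=\bigoplus_{s\in S}R_s$ and extend the given multiplication to $R$ by distributivity. The partial operation on $S$ is defined by declaring $st$ to be the class of $ab$ whenever $ab\neq0$ for representatives $a,b$; axiom a4), applied as $a\#a'\Rightarrow ab\#a'b$ and $ba\#ba'$, guarantees that this class depends only on $s$ and $t$, so $st$ is well defined. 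By construction $R_sR_t\subseteq R_{st}$ when $st$ is defined, and $R_sR_t\neq0$ forces $st$ to be defined, so $R$ induces $S$ in the sense of Definition \ref{sgris}, and its homogeneous part is precisely $A$ with the original operations.

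The main obstacle I anticipate is the bookkeeping in the converse: verifying that the distributive extension of the multiplication from the homogeneous part to the whole direct sum $R$ is consistent, namely single-valued, in agreement with the original product on homogeneous elements, and associative. Everything reduces to computations on homogeneous components, where a4) governs how products interact with addition, but one must keep careful track of the well-definedness of $S$ and of the component groups $R_s$ so that $R$ is a genuine internal direct sum and a genuine graded ring rather than a merely formal one.
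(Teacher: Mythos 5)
The paper does not actually prove this theorem: it is quoted as background from \cite{cha,ha,agg}, and the two halves of your argument are exactly the two constructions the paper describes around the statement --- the homogeneous part of a graded ring in one direction, and the linearization $\overline{A}=\bigoplus_{\delta\in\Delta^*}A(\delta)$ with $\Delta^*$ playing the role of $S$ in the other. So your overall route is the intended one, and the forward direction is complete as you sketch it (the identification $A(a)=R_s$ for $0\neq a\in R_s$, resting on $R_s\cap R_t=0$ for $s\neq t$, is the whole content of a2) and a3)).

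The one step in the converse that fails as written is the well-definedness of the product of classes. You propose to get $ab\#a'b'$ from $a\#a'$ and $b\#b'$ by chaining $ab\#a'b$ (from a4) with $c=b$) and $a'b\#a'b'$ (from a4) with $c=a'$); but almost transitivity requires the middle term to be \emph{nonzero}, and nothing prevents $a'b=0$ and $ab'=0$ while $ab\neq0\neq a'b'$, in which case both chains collapse. This is precisely the point the paper isolates when it remarks that it ``is known that $x\#y$ and $a\#b$ imply $xa\#yb$'' and refers to \cite{agg} for it. The standard repair: since $a\#a'$, the element $a+a'$ lies in $A$, and a4) applied to $b\#b'$ with $c=a+a'$ gives $(a+a')b\#(a+a')b'$; expanding by the distributivity clause of a4) and using $a'b=ab'=0$, this reads $ab\#a'b'$. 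With that lemma in hand, the remaining bookkeeping you flag --- conditions $i)$ and $ii)$ of Definition \ref{sgris} for $R=\bigoplus_{s\in S}R_s$, and single-valuedness and associativity of the linear extension of the product --- goes through as you describe.
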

\noindent Clearly, $0$ is the neutral element of all addibility
groups. Also, two distinct addibility groups have only one common
element, and it is $0.$ Let $\Delta^*$ denote the set of
addibility groups of $A$ and let $\Delta=\Delta^*\cup0.$ If
$0\neq\delta\in\Delta,$ then let $A(\delta)$ denote the addibility
group of $A$ that defines $\delta.$ Otherwise, that is, if
$\delta=0,$ $A(\delta)=0.$ Then, if $\xi,\eta\in\Delta,$ we put
\[\xi\eta=\left\{%
\begin{array}{ll}
    0, & \textrm{if}\ A(\xi)A(\eta)=0; \\
    \zeta, & \textrm{if}\ A(\xi)A(\eta)\subseteq A(\zeta) \\
\end{array}%
\right.\] (see, for instance, \cite{cha}). This multiplication on
$\Delta$ is justified since it is known that if $x,y,a,b\in A,$
then $x\#y$ and $a\#b$ imply $xa\#yb$ (see, for instance,
\cite{agg}). Now, to each anneid $A,$ we may associate a graded
ring $\overline{A},$ called the \emph{linearization of $A$}, in
the following manner. We will follow the exposition from \cite{ha}
(see also \cite{cha}). Define $\overline{A}$ to be the direct sum
of $A(\delta),$ where $\delta$ runs through $\Delta^*.$ The
multiplication of $\overline{A}$ is obtained by extending linearly
the multiplication of $A.$ The set $\Delta^*$ plays the role of
$S$ from Definition \ref{sgris}. Product of two elements of
$\Delta^*$ is defined if and only if their product is distinct
from $0$ in $\Delta,$ in which case these products coincide. Then
$\overline{A}=\bigoplus_{\delta\in\Delta^*}A(\delta)=\bigoplus_{\delta\in\Delta}A(\delta)$
is a graded ring whose homogeneous part is $A.$ If $0\neq a\in A,$
then the \emph{degree of $a$} \cite{cha,ha,agg}, denoted by
$\delta(a),$ is the element of $\Delta$ such that $a\in
A(\delta(a)).$ It is assumed to be equal to $0$ if $a=0.$ If
$a\neq0,$ then of course, $A(\delta(a))=A(a).$ Hence
$\delta(ab)=\delta(a)\delta(b)$ if $ab\neq0,$ and
$\delta(a)(\delta(b)\delta(c))=(\delta(a)\delta(b))\delta(c)$ if
$abc\neq0$ \cite{cha,ha,agg}. If $\delta$ is an idempotent element
of $\Delta^*,$ that is, if $\delta\delta=\delta,$ then clearly,
$A(\delta)$ is a ring with respect to operations induced from $A.$
\begin{definition}[\cite{cha,ha,agg}]
An anneid $A$ is said to be \emph{regular} if each of the
conditions $ac\#bc$ or $ca\#cb$ implies $a\#b$ for $ac\neq0,$
$bc\neq0,$ $ca\neq0,$ $cb\neq0,$ $a,b,c\in A.$
\end{definition}
\noindent In the language of graded rings, regularity is
equivalent to the notion of \emph{cancellativity} \cite{ak2,ak}.\\
A nonempty subset $I$ of an anneid $A$ is called a \emph{right
ideal of $A$} \cite{cha,ha,agg} if: $i)$ for all $a,b\in I,$ for
which $a\#b,$ we have $a-b\in I;$ $ii)$ for all $a\in I$ and $x\in
A,$ we have $ax\in I.$ A \emph{left ideal of an anneid $A$} is
defined similarly, and a subset of $A$ which is a left and a right
ideal is called an \emph{ideal} or a \emph{two-sided ideal}. If a
nonempty subset $I$ of $A$
satisfies the first condition only, then $I$ is called a \emph{subanneid} of $A.$\\
If $A$ and $A'$ are two anneids, then the mapping $f:A\to A'$ is
called a \emph{homomorphism} \cite{cha,ha,agg} if for all $a,b\in
A:$
\begin{itemize}
    \item[$i)$] $a\#b\Rightarrow f(a)\#f(b)\wedge f(a+b)=f(a)+f(b);$
    \item[$ii)$] $f(ab)=f(a)f(b);$
    \item[$iii)$] $f(a)\#f(b)\wedge f(a),f(b)\neq0\Rightarrow a\#b.$
\end{itemize}
Let $I$ be an ideal (left, right, two-sided) of an anneid $A$ and
let us define $a\sim b$ if and only if either both $a$ and $b$
belong to $I$ or $a\#b$ and $a-b\in I.$ It is easy to verify that
$\sim$ is an equivalence relation on $A.$ We also say that $a$ and
$b$ are congruent modulo $I$ and write $a\sim b\mod I.$ Denote the
set $A/\sim$ by $A/I.$ Clearly, $[a]_\sim=a+I=\{a+x\ |\ x\in
I\wedge a\#x\}.$ Now, let $I$ be a two-sided ideal, and let
$f:A\to A/I$ be the mapping defined by $f(a)=[a]_\sim.$ Put
$[a]_\sim[b]_\sim:=f(ab),$ $a,b\in A.$ We say that $[a]_\sim$ and
$[b]_\sim$ are addable and write $[a]_\sim\#[b]_\sim$ if $a\#b.$
If $[a]_\sim\#[b]_\sim,$ we put $[a]_\sim+[b]_\sim:=f(a+b).$ Then
$A/I$ becomes an anneid, called the \emph{factor anneid}
\cite{cha,ha,agg} (see also \cite{ak3}). Also, $f$ is a
homomorphism of anneids and if $A$ is regular, $A/I$ is regular
too. Isomorphism theorems for anneids are analogous to those for
ordinary rings.
\section{The graded Brown--McCoy radical}
\noindent Let $A$ be an anneid. We will fix some notation. If $I$
is a right ideal of $A,$ then $\check{I}$ denotes the largest
ideal of $A$ contained in $I.$ The class of simple regular anneids
with unity, that is, of regular anneids with unity whose only
ideals are $0$ and the anneid itself, will be denoted by
$\mathcal{M}.$ Also, if $a$ is an element of $A,$ $\langle
a\rangle$ denotes the ideal of $A$ generated by $a,$ that is, the
set comprised of all elements of the form
$na+\sum_{\textrm{finite}}ax_i+\sum_{\textrm{finite}}y_ia+\sum_{\textrm{finite}}x'_iay'_i,$
where $n$ is an integer, $x_i,y_i,x'_i,y'_i\in A,$ and where all
the summands are mutually addable \cite{cha,ha,agg}.\\
Following \cite{aa}, it makes sense to define the \emph{graded
Brown--McCoy radical $G(A)$ of an anneid} $A$ to be the
intersection of all maximal right ideals $I$ of $A$ such that
$A/\check{I}\in\mathcal{M}.$
\begin{remark}
The linearization $\overline{G(A)}=\bigoplus_{\delta\in
\Delta^*}G(A)\cap A(\delta)$ of $G(A)$ is a homogeneous (graded)
ideal of a graded ring $\overline{A}.$ This graded ideal will be
referred to as a \emph{graded Brown--McCoy radical of a graded
ring}. We could have simply called $G(A)$ the Brown--McCoy radical
of an anneid $A,$ just as it was done in \cite{ha1,ha} in the case
of the graded Jacobson radical of an anneid, which was called just
radical in accordance with \cite{nj}. We added the adjective
`graded' in order to avoid confusion with the classical
Brown--McCoy radical of a ring while discussing relations between
these notions.
\end{remark}
\noindent A right ideal $I$ of an anneid $A$ is called a
\emph{modular right ideal} if there exists $e\in A$ such that
$ea\sim a\mod I$ for all $a\in A$ \cite{ha1}. If $I$ is a proper
modular right ideal, then the degree of $e$ is an idempotent
element of $\Delta^*$ \cite{ha1,ha}. This notion served in a
description of the graded Jacobson radical of a regular anneid in
\cite{ha1,ha} by analogy with classical rings. In order to do the
same with the graded Brown--McCoy radical of a regular anneid, we
introduce the following natural notion.
\begin{definition}
An ideal $I$ of an anneid $A$ is called \emph{modular} if there
exists an element $e\in A$ such that for all $a\in A,$ $ea\sim
a\mod I$ and $ae\sim a\mod I.$ Element $e$ is called a \emph{unity
modulo} $I.$ We also say that an ideal $I$ is \emph{modular with
respect to $e.$}
\end{definition}
\begin{remark}\label{id}
If $I$ is a proper modular ideal of an anneid $A$ with respect to
$e,$ then $\delta(e),$ the degree of $e,$ is an idempotent element
of $\Delta^*,$ that is, $\delta(e)\delta(e)=\delta(e).$ We proceed
as in \cite{ha}. Indeed, since $e$ is a unity modulo $I,$ for all
$a\in A,$ either $ea,$ $a\in I$ or $ea\#a$ and $ea-a\in I,$ and,
either $ae,$ $a\in I$ or $ae\#a$ and $ae-a\in I.$ Since $I$ is
proper, $e\notin I.$ Also, $e^2\#e$ and $e^2-e\in I.$ Since
$e\notin I,$ we have that $e^2\neq0.$ This and the fact that $e^2$
and $e$ belong to the same addibility group, that is, $e^2\#e,$
imply the claim.
\end{remark}
\noindent The proof of the following lemma is included for the
sake of completeness.
\begin{lemma}
An ideal $I$ of an anneid $A$ is a maximal modular ideal if and
only if $A/I$ is a simple anneid with unity.
\end{lemma}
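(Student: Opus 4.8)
The plan is to prove both implications by transporting a unity back and forth between $A$ and the factor anneid $A/I$ along the quotient homomorphism $f\colon A\to A/I$, $f(a)=[a]_\sim$, and by invoking the correspondence between the ideals of $A/I$ and the ideals of $A$ containing $I$ that the isomorphism theorems for anneids provide. I read ``maximal modular ideal'' as a proper modular ideal that is not properly contained in any proper modular ideal. Before either direction I would record one auxiliary fact: if $I$ is modular with respect to $e$ and $J\supseteq I$ is any ideal of $A,$ then $J$ is also modular with respect to $e,$ because $ea\sim a\mod I$ and $ae\sim a\mod I$ trivially force $ea\sim a\mod J$ and $ae\sim a\mod J$ once $I\subseteq J.$

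For the forward direction I would start from a unity $e$ modulo $I$ and put $\bar e=f(e).$ Using $[e]_\sim[a]_\sim=f(ea)$ and $[a]_\sim[e]_\sim=f(ae)$ together with the modularity congruences $ea\sim a\mod I,$ $ae\sim a\mod I,$ I get $\bar e\,\bar a=\bar a=\bar a\,\bar e$ for all $a,$ so $\bar e$ is a unity of $A/I;$ since $I$ is proper, $A/I\neq0.$ To obtain simplicity I would take an arbitrary nonzero ideal $\bar J$ of $A/I$ and pass to its preimage $J=f^{-1}(\bar J),$ an ideal of $A$ strictly containing $I.$ Were $J$ proper, the auxiliary fact would make it a proper modular ideal properly containing $I,$ contradicting maximality; hence $J=A$ and $\bar J=A/I,$ so $A/I$ is simple.

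For the converse I would begin with a unity $\bar e$ of $A/I$ and lift it to some $e$ with $f(e)=\bar e.$ Reading $\bar e\,\bar a=\bar a$ and $\bar a\,\bar e=\bar a$ through the definition of multiplication in $A/I$ gives $[ea]_\sim=[a]_\sim$ and $[ae]_\sim=[a]_\sim,$ i.e.\ $ea\sim a\mod I$ and $ae\sim a\mod I,$ so $I$ is modular with respect to $e;$ and $A/I\neq0$ forces $I$ to be proper. Maximality then follows from the correspondence theorem: a proper ideal $J$ with $I\subsetneq J$ would correspond to a nonzero proper ideal $J/I$ of $A/I,$ contradicting simplicity, so in particular no proper modular ideal strictly contains $I.$

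The step I expect to be most delicate is the bookkeeping of the addability relation while translating ``$\bar e$ is a unity'' into the congruences $ea\sim a\mod I$ and $ae\sim a\mod I$: one must keep in mind that $x\sim y\mod I$ subsumes both the possibility $x,y\in I$ and the possibility $x\# y$ with $x-y\in I,$ so the factor-anneid identities convert into congruences without requiring any addability assumption on $e$ and $a$ in $A$ itself. The remaining ingredients -- the inheritance of modularity and the ideal correspondence -- are routine once these partial-addition subtleties are handled.
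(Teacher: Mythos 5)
Your proof is correct and follows essentially the same route as the paper: push the unity modulo $I$ down to a unity of $A/I$ and back via the quotient map, and convert between maximality of $I$ and simplicity of $A/I$ through the ideal correspondence. Your auxiliary observation that modularity is inherited by larger ideals reconciles your reading of ``maximal modular'' (maximal among proper modular ideals) with the paper's (modular and maximal as an ideal), so the two formulations agree and no gap results.
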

\begin{proof}
Let $I$ be a maximal modular ideal and $e$ a unity modulo $I.$ If
$a+I$ is an arbitrary element of $A/I,$ then, since $e$ is a unity
modulo $I,$ $ea+I=a+I=ae+I.$ Hence $(e+I)(a+I)=(a+I)(e+I)=a+I,$
which proves that $e+I$ is a unity of $A/I.$ Clearly, $A/I$ is a
simple anneid, since $I$ is a maximal
ideal.\\
Conversely, if $A/I$ is a simple anneid with unity $e+I,$ then,
for all $a\in A,$ we have $(e+I)(a+I)=(a+I)(e+I)=a+I,$ that is,
$ea+I=a+I=ae+I.$ Hence $ea\sim a\mod I$ and $ae\sim a\mod I.$
Therefore $I$ is a modular ideal. Also, since $A/I$ is simple, $I$
is a maximal ideal.
\end{proof}
\begin{corollary}\label{mod}
If $A$ is a regular anneid, then $G(A)$ coincides with the
intersection of all maximal right ideals $I$ of $A$ such that the
ideals $\check{I}$ are modular.
\end{corollary}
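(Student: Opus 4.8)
The plan is to replace, in the very definition of $G(A)$, the membership condition $A/\check{I}\in\mathcal{M}$ by an equivalent condition on the ideal $\check{I}$ alone, and then to compare the two families of maximal right ideals over which the intersections are taken.

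First I would reduce the membership condition. Since $A$ is regular, every factor anneid $A/\check{I}$ is again regular (as recalled in Section 2), so the requirement $A/\check{I}\in\mathcal{M}$ amounts exactly to $A/\check{I}$ being a simple anneid with unity. By the preceding lemma this holds if and only if $\check{I}$ is a maximal modular ideal. Hence $G(A)$ equals the intersection of all maximal right ideals $I$ for which $\check{I}$ is maximal modular. As every maximal modular ideal is in particular modular, the family appearing in the statement contains the defining one, and intersecting over a larger family can only make the result smaller; this yields at once the inclusion $\bigcap\{I:\check{I}\text{ modular}\}\subseteq G(A)$.

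For the reverse inclusion I must show that $G(A)\subseteq I$ for every maximal right ideal $I$ whose core $\check{I}$ is merely modular. The natural route is to prove that such an $I$ already occurs in the definition of $G(A)$, i.e.\ that $\check{I}$ is automatically maximal modular. Passing to $\bar{A}=A/\check{I}$, which is a regular anneid possessing a unity $\bar{e}$ (the image of a unity $e$ modulo $\check{I}$), the image $\bar{I}=I/\check{I}$ is a maximal right ideal whose largest two-sided subideal is $0$. It then suffices to show that $\bar{A}$ is simple: a nonzero two-sided ideal $\bar{J}$ cannot be contained in $\bar{I}$, its core being $0$, so $\bar{J}+\bar{I}=\bar{A}$ by maximality of $\bar{I}$, and one must exclude that $\bar{J}$ is proper. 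I expect this last point---deducing simplicity of $\bar{A}$ from the maximality of $\bar{I}$, the vanishing of its core, and the presence of the two-sided unity $\bar{e}$, using the cancellativity coming from regularity---to be the main obstacle. Once $\bar{A}$ is shown to be simple, $\check{I}$ is maximal modular, $I$ lies in the defining family, and the inclusion $G(A)\subseteq I$ is immediate, which together with the first inclusion gives the desired equality.
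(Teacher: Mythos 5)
Your first inclusion is correct and is exactly the paper's observation: by the Lemma preceding the Corollary, $A/\check{I}\in\mathcal{M}$ forces $\check{I}$ to be a maximal modular ideal, hence in particular modular, so the family of the Corollary contains the defining family and $\bigcap\{I:\check{I}\ \mathrm{modular}\}\subseteq G(A)$.

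The other direction, however, is where the entire content of the statement lies, and you do not prove it: you reduce it, correctly, to showing that a maximal right ideal $I$ with $\check{I}$ modular automatically has $A/\check{I}$ simple (equivalently, that $\check{I}$ is a \emph{maximal} modular ideal), and then you stop, declaring this ``the main obstacle.'' A proposal whose central step is announced as an obstacle establishes only one of the two inclusions. Moreover, the difficulty you sense is genuine and not merely technical: from the three facts you list --- maximality of $\bar{I}=I/\check{I}$ as a right ideal, vanishing of its core, and the presence of the two-sided unity $\bar{e}$ --- the ungraded analogue yields only that $A/\check{I}$ is a primitive ring with unity, and primitive rings with unity need not be simple (the endomorphism ring of an infinite-dimensional vector space is the standard example). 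So simplicity of $A/\check{I}$ cannot be extracted by the formal manipulation $\bar{J}+\bar{I}=\bar{A}$ that you sketch; any correct argument must use something beyond those hypotheses. For comparison, the paper's own proof of the Corollary is a two-line appeal to the Lemma in both directions: it asserts outright that $\check{I}$ modular (for $I$ a maximal right ideal) gives $A/\check{I}$ simple with unity and adds that regularity descends to the quotient. Your reduction therefore follows the same route as the paper, but the implication you could not close is precisely the one on which the whole equivalence rests, and until you can supply it (or a counterexample restricting the statement) your argument is incomplete.
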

\begin{proof}
If $I$ is a maximal right ideal of $A$ such that $\check{I}$ is
modular, then $A/\check{I}$ is a simple anneid with unity.
Moreover, $A/\check{I}$ is a regular anneid since $A$ is regular.
Conversely, if $I$ is a maximal right ideal of $A$ such that
$A/\check{I}$ is a regular simple anneid with unity, then
$\check{I}$ is a modular ideal of $A.$
\end{proof}
\begin{remark}\label{mod1}
If $I$ is a maximal right ideal such that $\check{I}$ is modular
with respect to $e,$ then $e$ is also a unity modulo $I,$ that is,
$ea\sim a\mod I$ and $ae\sim a\mod I$ for all $a\in A.$
\end{remark}
\noindent It is sometimes more pleasable to describe radicals of
rings by their modules. It is shown in \cite{ar} that any special
radical of an associative ring may be defined by some class of
modules. For analogous results on special radicals of group-graded
rings, see \cite{ib}. Here, we aim to describe the graded
Brown--McCoy radical of an anneid $A$ by the means of right
$A$-\emph{moduloids}, and so in the next paragraph we recall the notion of a moduloid.\\
If a group is the direct sum of a family of its subgroups, indexed
by a nonempty set, then such a group is called a \emph{graded
group} \cite{agg}. Let $\overline{M}=\bigoplus_{d\in
D}\overline{M}_d$ be a commutative graded group and
$\overline{A}=\bigoplus_{\delta\in\Delta}\overline{A}_\delta$ a
graded ring, where $D$ and $\Delta$ are nonempty sets. Moreover,
suppose that $\overline{M}$ is a right $\overline{A}$-module. A
right $\overline{A}$-module $\overline{M}$ is called \emph{graded}
\cite{cha,ha,agg} if for all $d\in D$ and $\delta\in\Delta$ there
exists $t\in D$ such that
$\overline{M}_d\overline{A}_\delta\subseteq\overline{M}_t.$ A
right $A$-\emph{moduloid} \cite{cha,ha,agg} $M$ is just the
homogeneous part of $\overline{M},$ that is, $M=\bigcup_{d\in
D}\overline{M}_d,$ with induced partial addition from
$\overline{M}$ and induced outer operation $M\times A\to M$ from
$\overline{M}\times\overline{A}\to\overline{M},$ where $A$ is the
corresponding anneid of $\overline{A}.$ Similarly to the case of
anneids, to an $A$-moduloid $M$ one can associate a graded
$\overline{A}$-module $\overline{M},$ called the
\emph{linearization of an $A$-moduloid} $M,$ where $\overline{A}$
is the linearization of an anneid $A$ (see, for instance, \cite{kv}).\\
A right $A$-moduloid $M$ is said to be \emph{regular}
\cite{cha,ha,agg} if $xa\#xb,$ $xa,xb\neq0,$ implies
$a\#b,$ where $x\in M,$ $a,b\in A.$\\
If $M$ and $M'$ are two right $A$-moduloids, then the mapping
$f:M\to M'$ is called a \emph{homomorphism} \cite{cha,ha,agg} if
for all $x,y\in M,$ $a\in A:$ $i)$ $x\#y\Rightarrow
f(x)\#f(y)\wedge f(x+y)=f(x)+f(y);$ $ii)$ $f(xa)=f(x)a;$ $iii)$
$f(x)\#f(y)\wedge f(x),f(y)\neq0\Rightarrow x\#y.$\\
Submoduloids are defined as usual (see \cite{cha}). A right
$A$-moduloid $M$ is said to be \emph{irreducible} \cite{ha} if
$MA\neq0$ and only submoduloids of $M$ are $0$ and $M.$ If $x$ is
an element of a right $A$-moduloid, then $(0:x)$ denotes the set
$\{a\in A\ |\ xa=0\},$ which is a right ideal of an anneid $A$
\cite{ha}. Correspondingly, the annihilator of a right
$A$-moduloid $M,$ that is, the set $\{a\in A\ |\ Ma=0\},$ will be
denoted by $(0:M)_A$ or just by $(0:M)$ (see \cite{ha}). If $A$ is
an anneid and $I$ a right ideal of $A,$ then $A/I$ is a right
$A$-moduloid with respect to $(a+I)+(b+I)=(a+b)+I$ if $a\#b,$ and
$(a+I)c=ac+I$ for $a+I, b+I\in A/I,$ $c\in A$ (see \cite{ha}).
Also, $f:A\to A/I,$ defined by $f(a)=a+I$ for $a\in A,$ is a
homomorphism of right $A$-moduloids \cite{ha}.\\
The notion of a simplicity is commonly used to mean that an
algebraic structure in hand has only trivial substructures.
Inspired by the notion from \cite{ib} for usual group-graded
modules, we give simplicity a different meaning in the following
definition.
\begin{definition}\label{definition}
An irreducible right $A$-moduloid $M$ over an anneid $A$ is called
\emph{simple} if $MA\neq0$ and if for all ideals $I$ of $A$ for
which $MI\neq0,$ there exists
$b\in\overline{I}=\bigoplus_{\delta\in \Delta^*}A(\delta)\cap I$
such that $xb=x$ for all $x\in M.$
\end{definition}
\noindent We will soon see, after proving two lemmas, that the
graded Brown--McCoy radical of a regular anneid $A$ coincides with
the intersection of annihilators of all regular simple right
$A$-moduloids. Therefore, inspired by the notion of the large
graded Jacobson radical of an anneid from \cite{ha1,ha}, it makes
sense to introduce the following notion.
\begin{definition}
The intersection of annihilators of all simple right
$A$-modul\-oids, which are not necessarily regular, is called the
\emph{large graded Brown--McCoy radical of an anneid} $A$ and is
denoted by $G_l(A).$
\end{definition}
\begin{lemma}\label{remark}
Let $M$ be a regular simple right $A$-moduloid. Then, if $I$ is an
ideal of $A$ such that $MI\neq0,$ then there exists $b\in I$ such
that $xb=x$ for all $x\in M,$ and, moreover, the degree of $b$ is
an idempotent element of $\Delta^*.$
\end{lemma}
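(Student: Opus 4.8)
The plan is to begin from the (possibly inhomogeneous) right identity supplied by simplicity and then to isolate a single homogeneous summand that already does the whole job, in analogy with extracting the identity-degree component in the group-graded case. Since $MI\neq0$, Definition \ref{definition} gives an element $b\in\overline{I}$ with $xb=x$ for all $x\in M$. Writing $b=\sum_{i=1}^{n}b_i$ with $b_i\in A(\delta_i)\cap I$ and the degrees $\delta_i\in\Delta^*$ pairwise distinct, I would fix a nonzero homogeneous $x\in\overline{M}_d$ and expand $x=xb=\sum_i xb_i$ inside $\overline{M}$. Each nonzero $xb_i$ is homogeneous of degree $d\delta_i$, so comparing homogeneous components (the left-hand side being concentrated in degree $d$) forces $\sum_{i:\,d\delta_i=d}xb_i=x$ together with $\sum_{i:\,d\delta_i=t}xb_i=0$ for every $t\neq d$.

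The decisive simplification comes from regularity of $M$. If two indices $i\neq j$ satisfied $d\delta_i=d\delta_j=t$ with $xb_i,xb_j\neq0$, then $xb_i$ and $xb_j$ would both lie in $\overline{M}_t$, hence $xb_i\#xb_j$; regularity would then give $b_i\#b_j$, i.e. $\delta_i=\delta_j$, a contradiction. Thus in each degree at most one summand survives, and combined with the vanishing relations this shows $xb_j=0$ for all but one index $i_0$, while $xe=x$ for the single homogeneous element $e:=b_{i_0}\in A(\delta_{i_0})\cap I$. The idempotency of $\delta(e)$ is then obtained exactly as in Remark \ref{id}: from $xe=x\neq0$ we get $xe^2=(xe)e=xe=x$, so $xe^2\#xe$ with both nonzero, whence regularity yields $e^2\#e$ and therefore $\delta(e)\delta(e)=\delta(e^2)=\delta(e)$.

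It remains to upgrade ``$e$ is a right identity on the chosen $x$'' to ``$e$ is a right identity on all of $M$'', and this globalization is the step I expect to be the main obstacle. Since $xe=x$ with $e\in A$, we have $x\in xA$, so $xA$ is a nonzero submoduloid (using regularity to see it is closed under addable differences), and irreducibility gives $xA=M$; it therefore suffices to prove $ye=y$ for every $y=xa$. Equivalently, one must show that the idempotent $\delta(e)$ stabilizes the whole support of $M$, i.e. $d'\delta(e)=d'$ whenever $\overline{M}_{d'}\neq0$. This is precisely where the one-sided, partial nature of the action bites: $e$ cannot be moved past $a$, so the identity $ye=y$ must be forced through $\delta(e)$ being idempotent, the vanishing $xb_j=0$ for $j\neq i_0$, and the fact that $b=\sum_i b_i$ is a global right identity rather than a local one. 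Once $ye=y$ is established throughout $M$, setting $b:=e$ finishes the argument; I expect the decomposition and the regularity computations to be routine and essentially all of the genuine difficulty to reside in this final passage from one homogeneous element to a global homogeneous identity.
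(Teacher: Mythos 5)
Your argument is incomplete at exactly the point you flag, and that gap is genuine: after isolating the component $e=b_{i_0}$ with $xe=x$ for one chosen homogeneous $x$, nothing you prove rules out that the surviving index $i_0$ depends on $x$, and the passage to ``$ye=y$ for all $y\in M$'' is left entirely open. The route you sketch for it (showing that $\delta(e)$ stabilizes the support of $M$) could at best match degrees; it would not force $ye-y=0$. Moreover, the statement asks for the conclusion $xb=x$ to hold for \emph{all} $x\in M$ simultaneously with a single homogeneous $b\in I$, so this globalization is not a technicality but the whole content, and as written the lemma is not proved.

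The paper avoids the obstacle by never singling out a component. Writing $b=b_1+\dots+b_n$ with $b_i\in A(\delta_i)\cap I$, it takes an arbitrary $x\in M$ and observes that the summands $xb_i$ in $x=xb_1+\dots+xb_n$ are mutually addable (their sum is the homogeneous element $x$); regularity of $M$ then transfers addability from the $xb_i$ to the $b_i$ themselves. Mutually addable nonzero elements lie in a common addibility group, so $b$ is already a homogeneous element of $A$, and since ideals of anneids are closed under addable sums, $b\in I$. The identity $xb=x$ for all $x\in M$ is then the one you started from --- no globalization is needed because the decomposition of $b$ was trivial all along --- and the idempotency of $\delta(b)$ follows from $xb^2=xb\neq0$ and regularity, exactly as in your computation for $e$. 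In short: rather than proving that one component of $b$ works everywhere, prove that $b$ has only one component. Your finer bookkeeping (tracking which $xb_i$ vanish and comparing homogeneous components of $\overline{M}$) is a reasonable start, but to close the proof you must turn it into the statement that all the $b_i$ share a single degree, not into a choice of a preferred $i_0$.
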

\begin{proof}
Let $M$ be a regular simple right $A$-moduloid, and $I$ an ideal
of $A$ such that $MI\neq0.$ By definition, there exists
$b\in\overline{I}$ such that $xb=x$ for all $x\in M.$ Then we have
$b=b_1+\dots+b_n$ for some natural number $n,$ and where $b_i\in
I,$ $i\in\{1,\dots,n\}.$ For an arbitrary $x\in M,$
$x=xb=x(b_1+\dots+b_n)=xb_1+\dots+xb_n.$ This means that $xb_i$
are mutually addable, and since $M$ is regular, $b_i$ are mutually
addable. Therefore $b=b_1+\dots+b_n\in I.$ Let us now prove that
the degree of $b$ is an idempotent element of $\Delta^*.$ Indeed,
since $xb=x$ for all $x\in M,$ we have $xb^2=xb\neq0,$ for
$x\neq0.$ Now, the regularity of $M$ implies $b^2\#b,$ and the
claim follows.
\end{proof}
\begin{lemma}
If $M$ is a regular simple right $A$-moduloid, then $M\cong A/I,$
where $I$ is a maximal right ideal of $A$ such that $\check{I}$ is
a modular ideal of $A.$ Conversely, if $I$ is a maximal right
ideal of $A$ such that $\check{I}$ is a modular ideal of $A,$ then
$A/I$ is a simple right $A$-moduloid.
\end{lemma}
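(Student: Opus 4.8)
The plan is to prove the two implications separately, in each case working through the explicit action homomorphism and the identification of $\check I$ with an annihilator. For the first direction, I would use irreducibility to fix $x\in M$ with $xA\neq0$ (possible since $MA\neq0$) and consider the map $f\colon A\to M$ given by $f(a)=xa$, where $A$ is viewed as a right $A$-moduloid over itself. The decisive observation is that $f$ is a moduloid homomorphism \emph{precisely because} $M$ is regular: properties $i)$ and $ii)$ of a homomorphism follow from distributivity and associativity of the outer action, while $iii)$, that $xa\#xb$ with $xa,xb\neq0$ forces $a\#b$, is exactly the definition of regularity of $M$. The image $xA$ is a nonzero submoduloid, hence equals $M$ by irreducibility, so the isomorphism theorem yields $M\cong A/I$ with $I=(0:x)$. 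Maximality of the right ideal $I$ is then immediate, since right ideals containing $I$ correspond to submoduloids of $M$, of which only $0$ and $M$ exist.

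Next I would identify $\check I$ with the annihilator $(0:M)$. One checks that $(0:M)$ is a two-sided ideal contained in $I=(0:x)$, and that any two-sided ideal $J\subseteq I$ satisfies $MJ=0$ (writing an arbitrary element of $M$ as $xa$ and using $aJ\subseteq J\subseteq(0:x)$), so $(0:M)$ is the largest ideal inside $I$, i.e. $\check I=(0:M)$. To show $\check I$ is modular, I would apply Lemma \ref{remark} to the ideal $A$ itself, which is legitimate because $MA\neq0$, obtaining a homogeneous $b\in A$ with $\delta(b)$ idempotent and $xb=x$ for all $x\in M$. Then for every $a\in A$ and $x\in M$ one has $x(ba)=(xb)a=xa$ and $x(ab)=(xa)b=xa$; a short case analysis, invoking regularity to deduce $ba\#a$ and $ab\#a$ whenever these are nonzero, shows that $ba-a$ and $ab-a$ annihilate $M$ and hence lie in $\check I$, so $b$ is a unity modulo $\check I$.

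For the converse, given $I$ maximal right with $\check I$ modular with respect to $e$, I would first note via Remark \ref{mod1} that $e$ is a unity modulo $I$ and that $e\notin I$, whence $(e+I)e=e^2+I\neq0$ gives $MA\neq0$, while maximality of $I$ gives irreducibility of $M=A/I$. It remains to verify the condition in Definition \ref{definition}: for an ideal $J$ with $MJ\neq0$, equivalently $J\not\subseteq\check I=(0:M)$, I must produce $b\in\overline J$ with $xb=x$ for all $x\in M$. Here I would use that $A/\check I$ is a simple anneid with unity $e+\check I$, which is the fact established in the proof of Corollary \ref{mod}; the image of $J$ is then a nonzero ideal of $A/\check I$, hence all of it, so $e\equiv j\pmod{\check I}$ for some $j\in J$. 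Since $e$ and $e-j$ are homogeneous of the same degree, $j$ is homogeneous and so $j\in\overline J$; and because $\check I$ is two-sided, $a(j-e)\in\check I\subseteq I$, giving $aj\sim ae\sim a\pmod I$, that is, $xj=x$ for every $x\in M$.

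I expect the main obstacle to be the addability bookkeeping forced by the partial addition: before forming a difference such as $ba-a$ or $aj-ae$ one must certify that the two terms lie in a common addibility group, and each congruence $\sim$ must be treated through the two cases of its definition. The conceptual crux, however, is the identification $\check I=(0:M)$ \emph{as a two-sided ideal}, since it is two-sidedness that keeps the left multiple $a(j-e)$ inside $I$ in the converse direction, paralleling the essential use of regularity that makes $a\mapsto xa$ a moduloid homomorphism in the forward direction.
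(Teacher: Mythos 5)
Your proof is correct and, in the forward direction, essentially the paper's own argument: the paper likewise realizes $M$ as $xA\cong A/(0:x)$ (with regularity of $M$ being exactly what makes $a\mapsto xa$ a moduloid homomorphism), obtains the identity-acting element from Lemma \ref{remark} (applied there to $\langle a\rangle$ rather than to $A$ itself, an immaterial choice), and verifies modularity of $\check{I}$ by the same two-sided case analysis; your identification $\check{I}=(0:M)$ is the same set the paper writes as $\{a\in A\mid Aa\subseteq I\}.$ The one genuine divergence is in the converse. For an ideal $K$ with $MK\neq0,$ the paper argues $MK=M,$ extracts $f\in K$ with $f+I=e+I,$ and concludes $af+I=a+I;$ but $f+I=e+I$ only yields $f-e\in I,$ and since $I$ is merely a right ideal the needed containment $af-ae=a(f-e)\in I$ does not follow as written. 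You instead produce $j\in K$ congruent to $e$ modulo the two-sided ideal $\check{I},$ via the simplicity of $A/\check{I}$ from Corollary \ref{mod}, so that $a(j-e)\in\check{I}\subseteq I$ is automatic. Your route therefore supplies exactly the congruence the left-multiplication step needs and tightens the terse spot in the paper's converse, at the cost of leaning on the assertion in Corollary \ref{mod} that $A/\check{I}$ is a simple anneid with unity (stated there in the regular setting, though the paper's justification of that particular implication does not invoke regularity). A small cosmetic remark: every element of $K$ is already homogeneous, so $j\in\overline{K}$ holds automatically and your degree argument for it is unnecessary.
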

\begin{proof}
Let $M$ be a regular simple right $A$-moduloid. Then, since
$MA\neq0,$ and since $M$ is regular, there exist $0\neq x\in M$
and $a\in A$ such that $xa\neq0.$ Therefore $M\langle
a\rangle\neq0.$ Since $M$ is simple and regular, Lemma
\ref{remark} implies that there exists $a'\in\langle a\rangle$
such that $ma'=m$ for all $m\in M.$ Let $x\neq0$ be an element of $M.$
Since $M$ is irreducible, we know from \cite{ha} that $M=xA,$ by analogy with the 
case of classical modules \cite{nj}. On the other hand, 
from \cite{ha} we know that
$xA\cong A/(0:x)$ by the first isomorphism theorem for moduloids
(for the theorem, see \cite{cha,ha}). Therefore, according to \cite{ha},
$(0:x)$ is a maximal modular right ideal of $A,$ that is, there
exists $e\in A$ such that $ea\sim a \mod (0:x)$ for all $a\in A.$
Denote $(0:x)$ by $I.$ It suffices now to prove that $\check{I}$
is a modular ideal of $A.$ Since $I$ is a modular right ideal of
$A,$ we have that $\check{I}=\{a\in A\ |\ Aa\subseteq I\},$ by
analogy with classical rings (see, for instance, \cite{bb}). We
already know that there exists $a'\in A$ such that $ma'=m$ for
every $m\in M,$ and particularly, $xa'=x.$ Also, for every $b\in
A,$ and every $c\in A,$ we have $x(cba')=(xcb)a'=xcb.$ Then, if
$x(cba')=xcb\neq0,$ the regularity of $M$ implies $cba'\#cb$ and
$ba'\#b.$ Hence $cba'-cb\in I$ and $ba'-b\in\check{I}.$ If
$x(cba')=xcb=0,$ then, of course, both $cba'$ and $cb$ belong to
$I,$ meaning that $ba',$ $b\in\check{I}.$ Also, for every $b\in
A,$ and every $c\in A,$ $x(ca'b)=(xca')b=xcb,$ which again, by the
regularity of $M,$ implies $ca'b-cb\in I$ if $x(ca'b)=xcb\neq0,$
and $ca'b,cb\in I$ if $x(ca'b)=xcb=0.$ Therefore $\check{I}$ is a
modular ideal of $A.$\\
Conversely, if $I$ is a maximal right ideal of $A$ such that
$\check{I}$ is modular, it is clear that $M=A/I$ is an irreducible
right $A$-moduloid. Let us recall, if $\check{I}$ is a modular
ideal with respect to $e,$ then $e$ is a unity modulo $I.$ If $K$
is an ideal of $A$ such that $MK\neq0,$ then $MK=M.$ Hence there
exists $f\in K$ such that $e+I=f+I,$ and so for all $a+I\in M,$
$af+I=a+I.$ Therefore $M$ is a simple right $A$-moduloid.
\end{proof}
\begin{corollary}\label{module}
The graded Brown--McCoy radical of a regular anneid $A$ coincides
with the intersection of annihilators of all regular simple right
$A$-moduloids.
\end{corollary}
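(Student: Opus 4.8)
The final statement to prove is Corollary~\ref{module}: the graded Brown--McCoy radical $G(A)$ of a regular anneid $A$ equals the intersection of annihilators of all regular simple right $A$-moduloids.

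The plan is to assemble $G(A)$ from the two characterizations already established, using the immediately preceding lemma as the bridge between right ideals and moduloids. By Corollary~\ref{mod}, for a regular anneid we have $G(A)=\bigcap I$, where $I$ ranges over all maximal right ideals of $A$ such that $\check{I}$ is modular. So it suffices to show that this intersection coincides with $\bigcap(0:M)$, where $M$ ranges over all regular simple right $A$-moduloids. First I would show the inclusion $\bigcap(0:M)\subseteq G(A)$: take a maximal right ideal $I$ with $\check{I}$ modular. The converse direction of the preceding lemma tells us that $M=A/I$ is then a regular simple right $A$-moduloid (regularity of $A/I$ follows from regularity of $A$, as noted in the Preliminaries on factor anneids). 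I would then identify $(0:A/I)$ and check that $(0:A/I)\subseteq I$, so that any element annihilating every regular simple moduloid in particular lies in $(0:A/I)\subseteq I$; intersecting over all such $I$ gives membership in $G(A)$.

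For the reverse inclusion $G(A)\subseteq\bigcap(0:M)$, I would start from an arbitrary regular simple right $A$-moduloid $M$ and invoke the forward direction of the preceding lemma, which gives an isomorphism $M\cong A/I$ for some maximal right ideal $I$ with $\check{I}$ modular. The key point is to relate the annihilator $(0:M)$ to the ideal $\check{I}$. Because $\check{I}$ is the largest two-sided ideal of $A$ contained in $I$, and because $(0:M)=(0:A/I)$ is a two-sided ideal contained in $I$ (as $Ma=0$ forces $a\in(0:x)=I$ for the generating $x$), one gets $(0:M)\subseteq\check{I}$; conversely $\check{I}$ annihilates $A/I$ since $A\check{I}\subseteq I$, giving $(0:M)=\check{I}$. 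Since this $I$ appears in the defining intersection for $G(A)$, every element of $G(A)$ lies in $I$, and I would then upgrade this to membership in $\check{I}=(0:M)$ using that $G(A)$ is itself a two-sided ideal contained in $I$, hence contained in the largest such ideal $\check{I}$.

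The main obstacle I anticipate is the careful bookkeeping identifying $(0:M)$ with $\check{I}$ rather than merely with $I$, and justifying that $G(A)$ lands inside the \emph{two-sided} ideal $\check{I}$ and not just inside the right ideal $I$. This hinges on $G(A)$ being a two-sided ideal (so that it is contained in $\check{I}$ by maximality of $\check{I}$ among ideals inside $I$) and on the explicit description $\check{I}=\{a\in A\mid Aa\subseteq I\}$ valid for modular $I$, which was used in the preceding lemma. Once these identifications are in place, the corollary follows by taking intersections: $G(A)=\bigcap_I \check{I}=\bigcap_M (0:M)$, with $M$ ranging over regular simple right $A$-moduloids. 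The argument is essentially a translation between the ideal-theoretic and module-theoretic descriptions, so no genuinely new computation is required beyond what the two lemmas supply.
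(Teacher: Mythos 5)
Your overall strategy is the one the paper intends (the corollary is stated without proof, as a direct combination of Corollary~\ref{mod} with the preceding lemma), and your first inclusion $\bigcap_M(0:M)\subseteq G(A)$ is fine: for each maximal right ideal $I$ with $\check{I}$ modular, $A/I$ is a regular simple moduloid and $(0:A/I)=\check{I}\subseteq I$, so intersecting gives the claim. The problem is in the reverse inclusion. You reduce it to showing $G(A)\subseteq\check{I}$ and justify this by saying that $G(A)$ is a two-sided ideal contained in $I$, hence contained in the largest such, namely $\check{I}$. But $G(A)$ is defined in this paper as an intersection of maximal \emph{right} ideals, so it is a priori only a right ideal; its two-sidedness is nowhere established before this point (the paper first asserts that $G(A)$ is an ideal only in Corollary~\ref{corollary}, which comes later and under extra hypotheses), and in fact proving that $G(A)$ is a left ideal requires essentially the same module-theoretic argument you are trying to shortcut. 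As written, this step is circular.

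The repair is standard and uses only what the preceding lemma already provides. Write $(0:M)=\bigcap_{0\neq x\in M}(0:x)$. The forward direction of the lemma (applied to an arbitrary nonzero $x\in M$, exactly as in its proof) shows that each $(0:x)$ is a maximal right ideal of $A$ with $\check{(0:x)}=(0:M)$ modular, i.e.\ each $(0:x)$ belongs to the family over which the defining intersection for $G(A)$ is taken. Hence $G(A)\subseteq(0:x)$ for every nonzero $x\in M$, and therefore $G(A)\subseteq(0:M)$, with no appeal to two-sidedness of $G(A)$. With this substitution your argument is complete and coincides with the intended derivation; as a byproduct it also shows $G(A)=\bigcap_I\check{I}$ and hence that $G(A)$ is indeed a two-sided ideal.
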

\begin{remark}
If $A$ is a regular anneid, notice that we have $G_l(A)\subseteq
G(A).$
\end{remark}
\noindent Let us now deal with the elementwise characterization of
the graded Brown--McCoy radical of a regular anneid, inspired by a
similar study on the graded Jacobson radical of a regular anneid
from \cite{ha1,ha}.\\
The following lemma follows the proof of the analogous lemma for
modular right ideals of regular anneids from \cite{ha}.
\begin{lemma}
If $I$ is a proper modular ideal of a regular anneid $A,$ then all
unities modulo $I$ have the same degree. Moreover, this degree is
an idempotent element of $\Delta^*.$
\end{lemma}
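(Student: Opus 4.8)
The plan is to fix two unities $e,e'$ modulo $I$ and show directly that $\delta(e)=\delta(e')$ by examining the single product $e'e$ and comparing it, through the congruence $\sim$, with each of $e$ and $e'$. The first preliminary observation is that neither $e$ nor $e'$ lies in $I$: if, say, $e\in I$, then for every $a\in A$ the relation $ea\sim a\bmod I$ together with $ea\in I$ (as $I$ is an ideal) would force $a\in I$ — in the addable branch one writes $a=ea-(ea-a)$ with both $ea$ and $ea-a$ in the additive subgroup $I\cap A(\delta(a))$ — so that $A=I$, contradicting properness. Hence $e,e'\notin I$.

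Next I would unwind the two-sided modularity on the element $e'e$. Since $e'$ is a unity modulo $I$, applying $e'a\sim a\bmod I$ with $a=e$ gives $e'e\sim e\bmod I$; since $e$ is a unity modulo $I$, applying $ae\sim a\bmod I$ with $a=e'$ gives $e'e\sim e'\bmod I$. I then check $e'e\neq0$: otherwise $0\sim e\bmod I$ would put $e\in I$, which we have excluded. With $e'e\neq0$ and $e\notin I$, the first congruence cannot fall in the ``both lie in $I$'' case, so it must be the addable case, i.e. $e'e\#e$; likewise $e'e\#e'$. Two nonzero addable elements share an addibility group, so $\delta(e'e)=\delta(e)$ and $\delta(e'e)=\delta(e')$, whence $\delta(e)=\delta(e')$. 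This establishes that all unities modulo $I$ have a common degree.

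For the ``moreover'' clause, the common degree is an idempotent of $\Delta^*$ by Remark \ref{id}, which already records that for a proper modular ideal the degree of any unity $e$ satisfies $\delta(e)\delta(e)=\delta(e)$; no further work is needed once the degree is known to be well defined.

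I do not expect a genuine obstacle here: the two-sided condition defining a modular (as opposed to merely right-modular) ideal makes the product $e'e$ simultaneously congruent to $e$ and to $e'$, which is exactly what collapses the two degrees. The only points demanding care are the routine case analysis built into the definition of $\sim$ (the ``both in $I$'' branch versus the addable branch) and the verification that $e'e\neq0$, both of which reduce to the properness of $I$. Regularity of $A$, although assumed in the statement so as to match the one-sided template followed in \cite{ha}, is not actually used in this argument for the equality of degrees.
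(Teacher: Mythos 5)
Your proof is correct, but it takes a genuinely different route from the paper's. The paper fixes $a\notin I$, passes to the canonical homomorphism $f:A\to A/I$, observes that $f(ea)=f(a)=f(e'a)\neq\bar{0}$, pulls the addibility back through property $iii)$ of homomorphisms to get $ea\#e'a$ with $ea,e'a\neq0$, and then invokes the \emph{regularity} of $A$ to cancel $a$ on the right and conclude $e\#e'$; only the left half of the modularity condition ($ea\sim a$, $e'a\sim a$) is used there, and the cancellation step is where regularity is indispensable. You instead exploit the two-sidedness of the modular condition on the single element $e'e$: the congruences $e'e\sim e$ (from $e'$ acting on the left) and $e'e\sim e'$ (from $e$ acting on the right), together with $e,e',e'e\notin I$ and hence nonzero, force both congruences into the addable branch, and almost-transitivity of $\#$ through the nonzero element $e'e$ gives $e\#e'$ directly. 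Your observation that regularity is never used is accurate: your argument proves the lemma for arbitrary anneids, whereas the paper's proof is tied to regular ones because it mirrors Halberstadt's one-sided template for modular right ideals, where only $ea\sim a$ is available and right cancellation cannot be avoided. Both proofs then reduce the ``moreover'' clause to Remark~\ref{id} in the same way. The case analysis you flag (the ``both in $I$'' branch versus the addable branch, and the verification $e'e\neq0$) is handled correctly via the properness of $I$, so there is no gap.
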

\begin{proof}
Let $f:A\to A/I$ be the canonical mapping, and $e,$ $e'$ be two
unities modulo $I.$ Then for all $a\in A,$
$f(a)=f(ea)=f(e'a)=f(ae)=f(ae').$ Hence in particular,
$\bar{0}\neq f(ea)\#f(e'a)\neq\bar{0}$ if $a\notin I.$ Since $f$
is a homomorphism, it follows that $ea\#e'a.$ Now, the regularity
of $A$ implies $e\#e'.$ Therefore, according to Remark \ref{id},
$e$ and $e'$ have the same degree, which is an idempotent element
of $\Delta^*.$
\end{proof}
\begin{definition}
The degree of all unities modulo a proper modular ideal $I$ of a
regular anneid is called the \emph{degree of $I.$}
\end{definition}
\noindent It is stated in \cite{ha1} and proved in \cite{ha} that
there exists a one-to-one correspondence between the maximal
modular right ideals of a regular anneid $A$ of degree $\epsilon$
and the maximal modular right ideals of $A(\epsilon).$ Here, we
prove the same for maximal modular ideals if we additionally assume that 
$\Delta$ is such that the product of nonidempotent elements cannot be a nonzero 
idempotent. We assume the same in Theorem~\ref{gregular}, Theorem~\ref{thm2},
Corollary~\ref{corollary} and Theorem~\ref{thm1}.
\begin{theorem}\label{thm}
Let $A$ be a regular anneid and $\epsilon$ an idempotent element
of $\Delta^*.$ If $I$ is a maximal right ideal of $A$ such that
$\check{I}$ is a modular ideal of $A$ of degree $\epsilon,$ then
$I_\epsilon=I\cap A(\epsilon)$ is a maximal right ideal of
$A(\epsilon)$ such that $\check{I_\epsilon}$ is a modular ideal of
$A(\epsilon).$ If $I$ is a maximal right ideal of $A(\epsilon)$
such that $\check{I}$ is modular, then $K=\{x\in A\ |\ xA\cap
A(\epsilon)\subseteq I\}$ is a maximal right ideal of $A$ such
that $\check{K}$ is a modular ideal of $A$ of degree $\epsilon.$
\end{theorem}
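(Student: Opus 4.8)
The plan is to establish a bijective correspondence between the two families of maximal right ideals (those of $A$ whose largest contained ideal is modular of degree $\epsilon$, and those of $A(\epsilon)$ whose largest contained ideal is modular) by exhibiting the two maps $I \mapsto I_\epsilon = I \cap A(\epsilon)$ and $J \mapsto K = \{x \in A \mid xA \cap A(\epsilon) \subseteq J\}$ and verifying that each preserves the stated structure. I would model the argument closely on the known one-to-one correspondence for maximal modular \emph{right ideals} (cited from \cite{ha1,ha}), since the modular-ideal version should reduce to it once I track how the operation $\check{(\,\cdot\,)}$ interacts with restriction to and extension from $A(\epsilon)$.

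First I would treat the forward direction. Given a maximal right ideal $I$ of $A$ with $\check{I}$ modular of degree $\epsilon$, let $e$ be a unity modulo $\check{I}$, so $\delta(e) = \epsilon$ and, by Remark~\ref{mod1}, $e$ is a unity modulo $I$ as well. I would first check $I_\epsilon = I \cap A(\epsilon)$ is a maximal right ideal of the ring $A(\epsilon)$: since $e \in A(\epsilon)$ acts as a two-sided identity modulo $I$, the moduloid $A/I$ restricted to the $\epsilon$-component is governed by $A(\epsilon)$, and maximality of $I$ should force maximality of $I_\epsilon$ by a standard correspondence-theorem argument using that $A(\epsilon)$ is a genuine ring (as $\epsilon$ is idempotent). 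Then I must identify $\check{I_\epsilon}$, the largest ideal of $A(\epsilon)$ inside $I_\epsilon$, and show it is modular. The natural candidate is $\check{I} \cap A(\epsilon)$, so I would argue $\check{I_\epsilon} = \check{I}\cap A(\epsilon) = (\check{I})_\epsilon$; modularity then follows because $e$ restricts to a unity modulo this ideal in $A(\epsilon)$. Here the hypothesis that $\Delta$ contains no product of nonidempotents equal to a nonzero idempotent enters: it guarantees that when I compute $\check{I} = \{a \in A \mid Aa \subseteq I\}$ and intersect with $A(\epsilon)$, degrees behave so that the ideal structure is not destroyed by the grading, i.e.\ the relevant products stay within controllable components.

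For the converse I would start from a maximal right ideal $J$ of $A(\epsilon)$ with $\check{J}$ modular, set $K = \{x \in A \mid xA \cap A(\epsilon) \subseteq J\}$, and verify in turn that $K$ is a right ideal (closure under addable differences and right multiplication is routine from the defining condition), that $K$ is proper and maximal, and that $\check{K}$ is a modular ideal of degree $\epsilon$. The cleanest route is to show $K \cap A(\epsilon) = J$ and $K_\epsilon = J$, so that the two constructions are mutually inverse; maximality of $K$ should then transfer back from maximality of $J$ via the correspondence already established in the forward direction, and the degree-$\epsilon$ modularity of $\check{K}$ should follow by lifting the unity modulo $\check{J}$ back to $A$ (it already lies in $A(\epsilon) \subseteq A$).

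The main obstacle I expect is controlling $\check{(\,\cdot\,)}$ under the passage between $A$ and $A(\epsilon)$: proving the two identities $\check{I_\epsilon} = (\check{I})_\epsilon$ and $\check{K} \cap A(\epsilon) = \check{J}$ requires showing that the largest \emph{ideal} contained in a restricted right ideal is exactly the restriction of the largest ideal, and this is precisely where the degree hypothesis on $\Delta$ is needed to prevent cross-component products from producing spurious idempotent-degree elements that would enlarge or shrink these ideals unexpectedly. I would isolate this as the technical heart, verifying it directly from the formula $\check{I} = \{a \in A \mid Aa \subseteq I\}$ (valid since $\check{I}$ is modular) together with the assumption that a product of nonidempotent degrees is never a nonzero idempotent, so that elements contributing to $Aa \cap A(\epsilon)$ are forced to come from $A(\epsilon)$ itself.
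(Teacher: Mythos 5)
Your plan follows essentially the same route as the paper: reduce to Halberstadt's correspondence for maximal modular \emph{right} ideals (so that $K$ is a maximal modular right ideal of $A$ with $K\cap A(\epsilon)=J$), and isolate the interaction of $\check{(\,\cdot\,)}$ with restriction to $A(\epsilon)$ under the hypothesis on $\Delta$ as the technical core --- exactly the structure of the paper's proof. One small correction of emphasis: the forward direction needs only the trivial inclusion $\check{I}\cap A(\epsilon)\subseteq\check{I_\epsilon}$ (an ideal containing a modular ideal is modular with the same unity) and holds without any hypothesis on $\Delta$, as the paper itself notes before Theorem~\ref{theoreminc}; the hypothesis is used only in the converse, both to prove $\check{J}\subseteq\check{K}$ and, in verifying that $e$ is a unity modulo $\check{K}$, to force $\delta(a)=\epsilon$ for every $a\notin\check{K}$ via regularity.
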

\begin{proof}
Let $I$ be a maximal right ideal of $A(\epsilon)$ such that $\check{I}$ is a
modular ideal of $A(\epsilon)$ with respect to $e.$ Since $A$ is regular, according
to \cite{ha1,ha} we have that $K=\{x\in A\ |\ xA\cap A(\epsilon)\subseteq I\}$
is a maximal modular right ideal of $A$ with respect to $e,$ and
$K\cap A(\epsilon)=I.$ It is clear that 
$\check{K}\cap A(\epsilon)\subseteq\check{I}.$ Let $x\in\check{I},$ and let
$a\in A.$ If $b\in A$ is such that $0\neq axb\in A(\epsilon),$ then, by our
assumption on $\Delta,$ we have that both $a$ and $b$ belong to $A(\epsilon).$
Since $x\in\check{I},$ it follows that $axb\in\check{I}.$ Therefore
$\check{I}\subseteq\check{K}\cap A(\epsilon).$ Thus 
$\check{K}\cap A(\epsilon)=\check{I},$ and so $\check{K}\neq0.$ It follows that
$\check{K}$ is a maximal ideal of $A.$ Let $a\in A.$ If $a\in\check{K},$ then
$ea,$ $ae\in\check{K}.$ If $a\notin\check{K},$ then there exist $x,$ $y\in A$
such that $xay\in A(\epsilon)$ but $xay\notin I.$ In particular, $xay\neq0.$
Since $A$ is regular, our assumption on $\Delta$ implies that 
$\delta(x)=\delta(y)=\delta(a)=\epsilon.$ By assumption, $\check{I}$ is a 
modular ideal of $A(\epsilon)$ with respect to $e.$ Therefore $ea\#a,$ $ae\#a,$
and $ea-a,$ $ae-a\in\check{I}\subseteq\check{K}.$
Since $\check{K}$ is a maximal ideal of $A,$ it follows that $A/\check{K}$ is
a regular simple anneid with unity $e+\check{K}.$ 
Therefore $\check{K}$ is modular.\\
Conversely, if $I$ is a maximal right ideal of $A$ such that
$\check{I}$ is modular, then clearly, $I_\epsilon=I\cap
A(\epsilon)$ is a modular right ideal of $A(\epsilon)$ such that
$\check{I_\epsilon}$ is modular. Also, according to \cite{ha1,ha},
$I_\epsilon$ is maximal.
\end{proof}
\begin{definition}
An element $x$ of an anneid $A$ is said to be \emph{$G$-regular}
if there exists no proper ideal $I$ of $A$ such that $x$ is a
unity modulo $I.$ An anneid (ideal) is called \emph{$G$-regular}
if every of its elements is $G$-regular.
\end{definition}
\noindent The following theorem is an analogue of the
characterization of quasi-regular elements of a regular anneid
from \cite{ha1,ha} with the same proving technique.
\begin{theorem}\label{gregular}
Let $A$ be a regular anneid. An element $x\in A$ is $G$-regular if
and only if one of the following two conditions is satisfied:
\begin{itemize}
    \item[$i)$] $\delta(x)$ is not an idempotent element of $\Delta^*;$
    \item[$ii)$] $\epsilon=\delta(x)$ is an idempotent element of $\Delta^*$ and
    $x$ is a $G$-regular element of the ring $A(\epsilon).$
\end{itemize}
\end{theorem}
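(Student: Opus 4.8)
The plan is to split according to whether $\delta(x)$ is idempotent, dispatch the non-idempotent case immediately via Remark \ref{id}, and reduce the idempotent case to an equivalence between $G$-regularity in $A$ and $G$-regularity in the ring $A(\epsilon)$, transported through Theorem \ref{thm}. First I would observe that if $\delta(x)$ is not idempotent and $x$ were not $G$-regular, there would be a proper ideal $I$ of $A$ with $x$ a unity modulo $I$; Remark \ref{id} would then force $\delta(x)$ to be idempotent, a contradiction. Hence condition $(i)$ already yields $G$-regularity, and everything reduces to proving, for $\delta(x)=\epsilon$ with $\epsilon$ idempotent, that $x$ is $G$-regular in $A$ if and only if $x$ is $G$-regular in $A(\epsilon)$. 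This equivalence delivers both implications of the theorem: if $x$ is $G$-regular in $A$ and $\delta(x)$ is idempotent then $(ii)$ holds, while if $(ii)$ holds then $x$ is $G$-regular in $A$.

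To set up the equivalence I would reformulate non-$G$-regularity in terms of the objects of Theorem \ref{thm}. If $x$ is a unity modulo a proper ideal, then by Zorn's lemma that ideal extends to a maximal proper ideal $M$ modulo which $x$ is still a unity; by the lemma characterizing maximal modular ideals the factor $A/M$ is a simple anneid with unity, and by the degree lemma its degree is $\delta(x)=\epsilon$. Choosing a maximal right ideal $I$ of $A$ whose image in $A/M$ is a maximal right ideal, simplicity of $A/M$ forces $\check{I}=M$, so that $\check{I}$ is modular of degree $\epsilon$ and, by Remark \ref{mod1}, $x$ is a unity modulo $I$. Thus $x$ fails to be $G$-regular in $A$ exactly when there is a maximal right ideal $I$ of $A$ with $\check{I}$ modular of degree $\epsilon$ and $x$ a unity modulo $I$, and the analogous statement holds in $A(\epsilon)$.

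Now Theorem \ref{thm} provides the correspondence $I\mapsto I\cap A(\epsilon)$ and $I\mapsto\{y\in A : yA\cap A(\epsilon)\subseteq I\}$ between such right ideals of $A$ of degree $\epsilon$ and those of $A(\epsilon)$, and in both directions the witnessing unity $e$ (of degree $\epsilon$) is the same. Since each relevant factor is simple with a unique unity, $x$ is a unity modulo the ideal in question precisely when $x\equiv e$ there; as $\delta(x)=\epsilon$ and the congruence class of $e$ is preserved by the correspondence, $x$ is a unity on one side exactly when it is a unity on the other. Combining this with the reformulation gives the desired equivalence, and hence the theorem.

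The main obstacle is precisely this transport of the unity together with the bookkeeping of degrees. One must verify that $e$ remains a unity modulo the transported ideal and that the congruence $x\equiv e$ survives both the restriction to and the extension from $A(\epsilon)$; the extension is the delicate half, and this is where the standing hypothesis on $\Delta$ is indispensable, exactly as in the proof of Theorem \ref{thm}: whenever a product $yaz$ of homogeneous elements lands in $A(\epsilon)\setminus\{0\}$, the assumption that a product of non-idempotents cannot be a nonzero idempotent forces $\delta(y)=\delta(a)=\delta(z)=\epsilon$, so the verification of the ideal and modularity conditions never leaves $A(\epsilon)$. By contrast, the restriction half $I\mapsto I\cap A(\epsilon)$ is routine: $x\notin I$ (otherwise $I=A$) gives properness of $I\cap A(\epsilon)$ directly, and no hypothesis on $\Delta$ is required there.
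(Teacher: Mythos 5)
Your proposal is correct and follows essentially the same route as the paper: case $(i)$ is dispatched by Remark \ref{id}, and case $(ii)$ is reduced to the equivalence ``$x$ is a unity modulo a maximal modular ideal of $A$ if and only if it is a unity modulo a maximal modular ideal of $A(\epsilon)$,'' obtained from the correspondence in Theorem \ref{thm}. The extra detail you supply (the Zorn's lemma extension to a maximal modular ideal and the verification that the unity $e$, and hence the congruence $x\equiv e$, is preserved under the correspondence) is exactly what the paper leaves implicit in its appeal to Theorem \ref{thm}.
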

\begin{proof}
According to Remark \ref{id}, every unity modulo a proper modular
ideal has the degree which is an idempotent element of $\Delta^*.$
Hence, if $x\in A$ satisfies $i),$ then $x$ is $G$-regular. If
$x\in A$ is such that $\epsilon=\delta(x)$ is an idempotent
element of $\Delta^*,$ we claim that $x$ is $G$-regular if and
only if $x$ is $G$-regular in $A(\epsilon).$ It suffices to prove
that $x\in A$ is a unity modulo a maximal modular ideal of $A$ if
and only if it is a unity modulo a maximal modular ideal of
$A(\epsilon).$ However, this follows from Theorem \ref{thm}.
\end{proof}
\begin{theorem}\label{thm2}
Let $A$ be a regular anneid. Then $G(A)=A$ if and only if $A$ is a
$G$-regular anneid.
\end{theorem}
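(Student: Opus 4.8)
The plan is to prove the statement as a chain of equivalences, translating both ``$G(A)=A$'' and ``$A$ is $G$-regular'' into the (non)existence of maximal modular two-sided ideals. First I would record two elementary reformulations. On the radical side, Corollary~\ref{mod} gives
\[
G(A)=\bigcap\{I : I \text{ is a maximal right ideal of } A \text{ with } \check{I} \text{ modular}\},
\]
so $G(A)=A$ holds precisely when this family is empty (the empty intersection being $A$, while a nonempty family forces $G(A)\subseteq I\subsetneq A$ for one of its members). On the regularity side, $A$ is $G$-regular if and only if no element is a unity modulo a proper ideal, i.e. if and only if $A$ has no proper modular ideal; by a routine Zorn's lemma argument (extend a proper modular ideal, keeping its unity $e$ outside, to a maximal one) this is in turn equivalent to $A$ having no maximal modular ideal.

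The heart of the proof is the bridge: the family above is empty if and only if $A$ has no maximal modular (two-sided) ideal. One direction is immediate from the lemma characterising maximal modular ideals as exactly those with simple-with-unity quotient, together with Corollary~\ref{mod}: if $I$ belongs to the family, then $\check{I}$ is modular and $A/\check{I}$ is a simple anneid with unity, hence $\check{I}$ is a maximal modular ideal. For the converse I would start from a maximal modular ideal $M$, so that $A/M$ is a simple anneid with unity $\bar{e}$, and manufacture a family member with core $M$. Since $A/M$ is a nonzero anneid with unity, a Zorn's lemma argument produces a maximal right ideal $\bar{I}$ of $A/M$ (proper right ideals cannot contain $\bar{e}$, since $\bar{e}\bar{a}=\bar{a}$, so unions of chains stay proper). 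Pulling $\bar{I}$ back along $A\to A/M$ yields a maximal right ideal $I\supseteq M$ of $A$; as $M$ is a two-sided ideal contained in $I$ we get $M\subseteq\check{I}\subsetneq A$, and the maximality of $M$ among two-sided ideals forces $\check{I}=M$. Thus $\check{I}=M$ is modular and $I$ lies in the family. Concatenating the equivalences yields $G(A)=A$ if and only if $A$ is $G$-regular.

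The step I expect to be the main obstacle is this last construction, because it must be carried out inside anneids rather than rings: every Zorn's lemma step has to respect the partial addition, so one must check that a union of a chain of (right) ideals is again a (right) ideal under addability, that ``proper'' is correctly detected by the unity $\bar{e}$ (using $\bar{e}\bar{a}=\bar{a}$ and closure under the induced operations), and that the pull-back $I$ really is a maximal right ideal with core exactly $M$. Remark~\ref{id} is what guarantees that the unities involved have idempotent degree, so these manipulations are well founded. An alternative route, should the direct argument become cumbersome, is to reduce everything degreewise: using Theorem~\ref{thm} to match maximal modular ideals of $A$ of degree $\epsilon$ with those of the ring $A(\epsilon)$, and Theorem~\ref{gregular} to match $G$-regularity of $A$ with $G$-regularity of the rings $A(\epsilon)$, the statement then follows from the classical fact that a ring coincides with its own Brown--McCoy radical exactly when it is $G$-regular.
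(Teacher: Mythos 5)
Your proposal is correct, but in the substantive direction it takes a genuinely different route from the paper. The paper proves ``$G$-regular $\Rightarrow G(A)=A$'' by taking a proper right ideal $I$ whose quotient $A/\check{I}$ has a unity $a+\check{I}$, noting that $\delta(a)$ must then be idempotent, invoking Theorem \ref{gregular} to conclude that $a$ is $G$-regular in the component ring $A(\delta(a))$, and then using the classical elementwise description $a\in G(a)=\{ax-x+\sum_{i}(y_iaz_i-y_iz_i)\}$ to force $a\in\check{I}$, a contradiction; the converse is a one-line appeal to Remark \ref{mod1}. You instead run a purely ideal-theoretic chain of equivalences: $G(A)=A$ iff the family of maximal right ideals with modular core is empty, iff $A$ has no maximal modular two-sided ideal (your ``bridge'', using the lemma characterising maximal modular ideals via simple-with-unity quotients in one direction, and a Zorn-plus-correspondence construction of a maximal right ideal $I$ with $\check{I}=M$ in the other), iff $A$ has no proper modular ideal, iff $A$ is $G$-regular. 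Both arguments are sound; the anneid bookkeeping you flag (unions of chains of right ideals, detecting properness by the unity, the correspondence theorem for $A/M$) is covered by the paper's blanket statement that the isomorphism theorems for anneids behave as for rings. Your route has a notable payoff: by bypassing Theorem \ref{gregular} (and hence Theorem \ref{thm}), it never uses the standing hypothesis that products of nonidempotent elements of $\Delta$ cannot be nonzero idempotents, under which the paper explicitly places Theorem \ref{thm2}; your sketched ``alternative route'' at the end is essentially the paper's actual proof and would reinstate that hypothesis. What the paper's route buys is the elementwise, degreewise picture of $G$-regularity, which it immediately reuses in Corollary \ref{corollary} and Theorem \ref{thm1}.
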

\begin{proof}
Let $G(A)=A.$ According to Remark \ref{mod1}, no element of $A$
can be a unity modulo a proper ideal, whence every element of $A$
is a $G$-regular element.\\
Conversely, suppose that every element of a regular anneid $A$ is
$G$-regular. Take any right ideal $I$ of $A$ such that $I\neq A.$
Suppose that $A/\check{I}$ has a unity $a+\check{I},$
$a\notin\check{I}.$ Then $a^2+\check{I}=a+\check{I}.$ This and the
fact that $a\notin \check{I}$ imply $0\neq a^2\#a.$ Hence
$\epsilon=\delta(a)$ is an idempotent element of $\Delta^*.$
According to the previous theorem, $a$ is a $G$-regular element of
$A(\epsilon),$ that is, $a$ belongs to
$G(a):=\{ax-x+\sum_{i=1}^n(y_iaz_i-y_iz_i)\ |\ x,y_i,z_i\in
A(\epsilon),\ n\ \textrm{a natural number}\}$ (see, for instance,
\cite{gw}). However, for any element
$ax-x+\sum_{i=1}^n(y_iaz_i-y_iz_i)$ from $G(a),$ we have
$(a+\check{I})(x+\check{I})-(x+\check{I})+\sum_{i=1}^n((y_i+\check{I})(a+\check{I})(z_i+\check{I})-(y_i+\check{I})(z_i+\check{I}))=0+\check{I},$
since $a+\check{I}$ is by assumption a unity of $A/\check{I}.$
Hence $ax-x+\sum_{i=1}^n(y_iaz_i-y_iz_i)\in \check{I}.$ Therefore
$G(a)\subseteq \check{I},$ and particularly, $a\in \check{I},$ a
contradiction. We proved that there is no proper right ideal $I$
of $A$ such that $A/\check{I}$ is a simple regular anneid with
unity, and so $A$ is a graded Brown--McCoy radical anneid, that
is, $G(A)=A.$
\end{proof}
\begin{corollary}\label{corollary}
Let $A$ be a regular anneid. Then $G(A)$ is a $G$-regular ideal
that contains all $G$-regular ideals of $A.$
\end{corollary}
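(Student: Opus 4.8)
The plan is to establish the two assertions separately, in each case reducing to the element-wise notion of $G$-regularity and exploiting the description of $G(A)$ from Corollary~\ref{mod} as the intersection of those maximal right ideals $I$ for which $\check{I}$ is modular (equivalently, for which $A/\check{I}$ is a simple regular anneid with unity).

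For the inclusion ``$G(A)$ contains every $G$-regular ideal'', I would argue by contradiction. Suppose $J$ is a $G$-regular ideal and $x\in J\setminus G(A)$. By Corollary~\ref{mod} there is a maximal right ideal $I$ with $\check{I}$ modular, say with respect to $e$, such that $x\notin I$. Then $\check{I}$ is a proper ideal, $A/\check{I}$ is a simple anneid with unity $e+\check{I}$, and the image of $J$ in $A/\check{I}$ is a two-sided ideal which is nonzero, since it contains $x+\check{I}\neq\bar{0}$ because $x\notin I\supseteq\check{I}$. Simplicity forces this image to be all of $A/\check{I}$, so there is $y\in J$ with $y\sim e\bmod\check{I}$. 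As $\sim$ is compatible with multiplication and $e$ is a unity modulo $\check{I}$, we get $ya\sim ea\sim a$ and $ay\sim ae\sim a\bmod\check{I}$ for all $a\in A$, so $y$ is a unity modulo the proper ideal $\check{I}$, contradicting the $G$-regularity of $y\in J$. Hence $J\subseteq G(A)$.

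For ``$G(A)$ is a $G$-regular ideal'', I would again argue by contradiction: suppose some $x\in G(A)$ fails to be $G$-regular, so that $x$ is a unity modulo a proper ideal $J$. By Remark~\ref{id}, $\delta(x)$ is then idempotent. A Zorn's lemma argument in the poset of ideals containing $J$ but not containing $x$ yields a proper ideal $M\supseteq J$ maximal with respect to excluding $x$; one checks that any ideal strictly containing $M$ must equal $A$, using that $x$ is a unity modulo $M$, so $M$ is a maximal modular ideal, and since $A$ is regular, $A/M$ is a simple regular anneid with unity $x+M$. The crux is then to return from this two-sided ideal to the right-ideal description of $G(A)$: I would choose a maximal (possibly zero) proper right ideal $\bar{L}$ of the unital simple anneid $A/M$ by Zorn's lemma, and let $L$ be its preimage in $A$. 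Then $L$ is a maximal right ideal with $x\notin L$, and because $A/M$ is simple its only two-sided ideals are trivial, which forces $\check{L}=M$; as $M$ is modular, $L$ is a maximal right ideal with $\check{L}$ modular and $x\notin L$, whence $x\notin G(A)$, a contradiction.

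The step I expect to be the main obstacle is precisely this passage from the two-sided maximal modular ideal $M$ to a maximal right ideal $L$ with $\check{L}=M$. It requires knowing that a simple anneid with unity admits a maximal proper right ideal, which is a Zorn's lemma argument that must respect the partial addition and use that the unity lies outside every proper right ideal, together with the observation that under the ideal correspondence the largest two-sided ideal inside $L$ collapses to $M$ by simplicity. The remaining verifications, namely that unions of chains of ideals avoiding $x$ are again ideals avoiding $x$, and that congruence modulo $\check{I}$ transports the unity from $e$ to $y$, are routine once the anneid congruence conventions of Section~2 are applied carefully.
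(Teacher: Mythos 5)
Your argument is correct, and it supplies a proof that the paper itself omits: Corollary~\ref{corollary} is stated with no proof, as the anneid analogue of the classical fact (cf.\ \cite[Corollary 4.8.3]{gw}) that the Brown--McCoy radical is the largest $G$-regular ideal, presumably intended to be read off from Theorem~\ref{thm2} via the usual radical-theoretic template. What you write out is exactly that classical template transplanted to anneids, and both halves go through. For the containment of $G$-regular ideals, passing to the image of $J$ in the simple anneid $A/\check{I}$ and pulling the unity back to an element $y\in J$ is sound (one only needs that images and congruences behave as in Section~2, which they do, and that $e\notin\check{I}$ so that $y\#e$). For $G$-regularity of $G(A)$, you correctly identify the one nontrivial pivot: having produced a maximal two-sided modular ideal $M$ excluding $x$, you must re-enter the right-ideal description of $G(A)$, and your choice of a maximal proper right ideal $\bar{L}$ of the unital anneid $A/M$ with preimage $L$ does this --- $L$ is a maximal right ideal, $x\notin L$ since $x+M$ is the unity and no proper right ideal contains it, and $\check{L}=M$ because any two-sided ideal inside $L$ maps to a two-sided ideal of the simple anneid $A/M$ sitting inside the proper $\bar{L}$, hence to zero; regularity of $A$ then gives $A/\check{L}\in\mathcal{M}$, so $G(A)\subseteq L$, a contradiction. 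Two remarks on the comparison: your route uses only the definition of $G(A)$, Corollary~\ref{mod} and Remark~\ref{mod1}, and in particular does not invoke Theorem~\ref{gregular} or Theorem~\ref{thm2}, nor the standing assumption on $\Delta$ under which the paper places this corollary --- so your argument is, if anything, more self-contained and more general than what the paper's positioning suggests. The only imprecision is cosmetic: in the Zorn step you should say that any ideal $K\supsetneq M$ contains $x$ and that $x$ is a unity modulo $K$ (not merely modulo $M$), whence $K=A$; this is what your argument actually uses.
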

\noindent If $A$ is an anneid and if $\epsilon$ is an idempotent
element of $\Delta^*,$ then we will denote the classical
Brown--McCoy radical of the ring $A(\epsilon)$ by
$G(A(\epsilon)).$ The proof of the following theorem follows
Halberstadt's proof of the equality $J(A(\epsilon))=J(A)\cap
A(\epsilon),$ where $J(A(\epsilon))$ denotes the classical
Jacobson radical of the ring $A(\epsilon),$ and $J(A)$ the graded
Jacobson radical of a regular anneid $A$ \cite{ha}. We include it
for completeness.
\begin{theorem}\label{thm1}
If $A$ is a regular anneid, then for every idempotent element
$\epsilon$ of $\Delta^*,$ we have $G(A(\epsilon))=G(A)\cap
A(\epsilon).$
\end{theorem}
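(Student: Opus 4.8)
The plan is to prove the two inclusions of $G(A(\epsilon))=G(A)\cap A(\epsilon)$ separately, using as the common bridge the elementwise $G$-regularity criterion of Theorem \ref{gregular} together with the fact, recorded in Corollary \ref{corollary}, that $G(A)$ is the $G$-regular ideal of $A$ containing every $G$-regular ideal; in particular, every element of $G(A)$ is $G$-regular. Applying Corollary \ref{corollary} to the ring $A(\epsilon)$, regarded as a regular anneid with the single idempotent degree $\epsilon$ (for which the present graded notion coincides with the classical Brown--McCoy radical, as fixed in the text), I will also use that $G(A(\epsilon))$ is precisely the set of $G$-regular elements of the ring $A(\epsilon)$.

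For the inclusion $G(A)\cap A(\epsilon)\subseteq G(A(\epsilon))$, I take a nonzero $x\in G(A)\cap A(\epsilon)$, so that $\delta(x)=\epsilon$. Since $x\in G(A)$, it is $G$-regular in $A$, and because $\epsilon$ is idempotent, alternative $i)$ of Theorem \ref{gregular} cannot hold; hence alternative $ii)$ must, i.e.\ $x$ is $G$-regular in the ring $A(\epsilon)$. Therefore $x\in G(A(\epsilon))$, which settles this inclusion (the case $x=0$ being trivial).

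The reverse inclusion $G(A(\epsilon))\subseteq G(A)\cap A(\epsilon)$ is where the real work lies. Since $G(A(\epsilon))\subseteq A(\epsilon)$ automatically, it remains to show $G(A(\epsilon))\subseteq G(A)$. Fix $g\in G(A(\epsilon))$; then $g$ is $G$-regular in $A(\epsilon)$, hence $G$-regular in $A$ by Theorem \ref{gregular}. To deduce $g\in G(A)$ I would, in view of Corollary \ref{corollary}, exhibit a $G$-regular ideal of $A$ that contains $g$, the natural candidate being the ideal $\langle g\rangle$ generated by $g$. Thus the crux is to verify that every homogeneous element of $\langle g\rangle$ is $G$-regular. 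A homogeneous element of $\langle g\rangle$ is an addable sum of elements of the forms $ng$, $xg$, $gy$ and $xgy$ with $x,y\in A$ homogeneous, and all summands share its degree; if that degree is not idempotent the element is $G$-regular by alternative $i)$ of Theorem \ref{gregular}, so only the summands of idempotent degree need attention. Note that $G(A(\epsilon))$ is in general not itself an ideal of $A$, since a product such as $xgy$ may land outside $A(\epsilon)$; this is exactly why passing to $\langle g\rangle$ and invoking Corollary \ref{corollary} is the right device.

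The main obstacle is precisely this idempotent-degree case, and it is where the standing hypothesis on $\Delta$ enters, exactly as in the proof of Theorem \ref{thm}. If a homogeneous summand, say $xgy\neq0$, has degree $\delta(x)\epsilon\delta(y)$ equal to a nonzero idempotent, then, reasoning as in Theorem \ref{thm}, the assumption on $\Delta$ together with the regularity of $A$ forces $x,y\in A(\epsilon)$ (so that the degree in question is in fact $\epsilon$). Consequently $xgy\in A(\epsilon)$, and since $G(A(\epsilon))$ is an ideal of the ring $A(\epsilon)$ with $g\in G(A(\epsilon))$, we obtain $xgy\in G(A(\epsilon))$; hence $xgy$ is $G$-regular in $A(\epsilon)$, and therefore in $A$. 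The same applies to summands of the forms $ng$, $xg$, $gy$. Thus every homogeneous element of $\langle g\rangle$ of idempotent degree lies in $G(A(\epsilon))$ and is $G$-regular, while those of non-idempotent degree are $G$-regular by Theorem \ref{gregular}. Therefore $\langle g\rangle$ is a $G$-regular ideal of $A$, and Corollary \ref{corollary} gives $\langle g\rangle\subseteq G(A)$, so $g\in G(A)$. This completes the inclusion and hence the equality $G(A(\epsilon))=G(A)\cap A(\epsilon)$.
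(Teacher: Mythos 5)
Your argument for the inclusion $G(A)\cap A(\epsilon)\subseteq G(A(\epsilon))$ rests on a false principle that you announce at the outset: that $G(A(\epsilon))$ ``is precisely the set of $G$-regular elements of the ring $A(\epsilon)$.'' The classical Brown--McCoy radical is the largest $G$-regular \emph{ideal}; the set of $G$-regular elements is in general strictly larger and need not be an ideal. For instance, in a simple ring with unity such as $M_2(\mathbb{Q})$ the only proper ideal is $0$, so every element other than the unity is $G$-regular in the sense used here, yet the Brown--McCoy radical is $0$. Corollary \ref{corollary} does not supply the principle either: it says $G(A)$ contains every $G$-regular \emph{ideal}, not every $G$-regular element. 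Consequently your step ``$x$ is $G$-regular in the ring $A(\epsilon)$, therefore $x\in G(A(\epsilon))$'' does not follow as written. The repair is exactly what the paper does: $G(A)\cap A(\epsilon)$ is an ideal of the ring $A(\epsilon)$, and your elementwise argument shows that all of its elements are $G$-regular in $A(\epsilon)$; hence it is a $G$-regular ideal of $A(\epsilon)$ and is therefore contained in $G(A(\epsilon))$ by the standard fact that the Brown--McCoy radical contains all $G$-regular ideals (\cite[Corollary 4.8.3]{gw}). Note that this caution does not infect your other inclusion, where you only use the true direction (elements of the radical are $G$-regular, and $G(A(\epsilon))$ is an ideal of $A(\epsilon)$).

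Once that is fixed, your proof of $G(A(\epsilon))\subseteq G(A)$ is a genuinely different route from the paper's. You exhibit $\langle g\rangle$ as a $G$-regular ideal of $A$ by a degree analysis of its homogeneous elements (non-idempotent degrees are handled by Theorem \ref{gregular}$(i)$, idempotent degrees are forced back into $G(A(\epsilon))$ by the standing hypothesis on $\Delta$ and regularity, exactly as in the proof of Theorem \ref{thm}) and then invoke Corollary \ref{corollary}. The paper instead argues by contradiction: from $x\notin G(A)$ it produces, via Theorem \ref{thm} and Corollary \ref{mod}, an element $xy\notin G(A(\epsilon'))$ with $\epsilon'$ idempotent, and then a computation with degrees yields $\epsilon'=\epsilon$, a contradiction. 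Your version is more constructive and makes visible exactly where the hypothesis on $\Delta$ is needed, at the cost of leaning on the (paper-sanctioned but delicate) claim that a nonzero product landing in an $A(\phi)$ with $\phi$ idempotent forces all factors into $A(\phi)$; the paper's version concentrates that delicacy inside Theorem \ref{thm} and keeps this proof short.
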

\begin{proof}
Ideal $G(A)\cap A(\epsilon)$ is a $G$-regular ideal of
$A(\epsilon),$ according to Theorem \ref{gregular} and the
previous corollary. Hence, according to \cite[Corollary
4.8.3]{gw}, $G(A)\cap A(\epsilon)$ is
contained in $G(A(\epsilon)).$\\
Conversely, let $x\in G(A(\epsilon))$ and suppose that $x\notin
G(A).$ Then, according to Theorem \ref{thm} and Corollary
\ref{mod}, there exists $y\in A$ such that $\epsilon'=\delta(xy)$
is an idempotent element of $\Delta^*,$ but $xy\notin
G(A(\epsilon')).$ Clearly, $\epsilon\neq \epsilon'.$ Since
$\epsilon\delta(y)=\epsilon'$ and $A$ is a regular anneid,
$\epsilon,\epsilon'$ and $\delta(y)$ are mutually distinct. Notice
that in any ring $R,$ the set $I=\{r\in R\ |\ RrR=0\}$ is an ideal
of $R$ and $I^3=0,$ which implies that $I$ is contained in the
Jacobson radical of $R,$ and hence, $I$ is contained in the
Brown--McCoy radical of $R.$ In our case, $xy\notin
G(A(\epsilon')),$ and so there exists $z\in A(\epsilon')$ such
that $zxy\neq0.$ This implies
$\epsilon'(\epsilon\delta(y))=(\epsilon'\epsilon)\delta(y),$ and
so
$(\epsilon'\epsilon)\delta(y)=\epsilon'^2=\epsilon'=\epsilon\delta(y)\neq0.$
Therefore, since $A$ is regular,
$\epsilon'\epsilon=\epsilon=\epsilon^2\neq0.$ Again, by the
regularity of $A,$ we have $\epsilon'=\epsilon,$ a contradiction.
\end{proof}
\begin{remark}
One cannot discard the assumption made on $\Delta.$ Namely, according to 
Example in \cite{grz}, page 352, there exists a 
simple $C_2$-graded ring $R$ without unity such that $R_e$ is not a Brown--McCoy 
radical ring. Since a simple graded ring is graded simple, it follows that 
$A=R_e\cup R_g$ is a graded Brown--McCoy radical anneid. Therefore, 
$G(R_e)\subsetneqq G(A)\cap R_e.$ (Here, $A(e)=R_e.$)
\end{remark}
\noindent However, the first statement of Theorem~\ref{thm} is true in general, and 
implies the following result.
\begin{theorem}\label{theoreminc}
Let $A$ be a regular anneid and let $\epsilon$ be a nonzero idempotent element 
of $\Delta.$ Then $G(A(\epsilon))\subseteq G(A)\cap A(\epsilon).$
\end{theorem}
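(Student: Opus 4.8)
The plan is to reduce everything to the single inclusion $G(A(\epsilon))\subseteq G(A)$, since $G(A(\epsilon))\subseteq A(\epsilon)$ is automatic, and then to run exactly the argument of the second part of the proof of Theorem~\ref{thm1}, being careful to invoke \emph{only} the first statement of Theorem~\ref{thm}. The key observation is that this first statement, namely that a maximal right ideal $I$ of $A$ with $\check{I}$ modular of degree $\epsilon$ restricts to a maximal right ideal $I_\epsilon=I\cap A(\epsilon)$ of $A(\epsilon)$ with $\check{I_\epsilon}$ modular, is established in the last paragraph of the proof of Theorem~\ref{thm} \emph{without} using the hypothesis on $\Delta$; that hypothesis enters only in the converse half. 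Hence the whole argument will survive the removal of the assumption on $\Delta$.

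Concretely, I would take $x\in G(A(\epsilon))$ and assume, for contradiction, that $x\notin G(A)$. By Corollary~\ref{mod} there is then a maximal right ideal $I$ of $A$ with $\check{I}$ modular, with unity $e$ modulo $\check{I}$ of degree $\epsilon'=\delta(e)$ (which is also a unity modulo $I$ by Remark~\ref{mod1}), such that $x\notin I$. Applying the first statement of Theorem~\ref{thm} to $I$ shows that $I_{\epsilon'}=I\cap A(\epsilon')$ is a maximal right ideal of $A(\epsilon')$ with $\check{I_{\epsilon'}}$ modular, so that the classical Brown--McCoy radical satisfies $G(A(\epsilon'))\subseteq I_{\epsilon'}\subseteq I$. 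The first case to dispose of is $\epsilon'=\epsilon$: here $x\in G(A(\epsilon))=G(A(\epsilon'))\subseteq I$, contradicting $x\notin I$; hence $\epsilon'\neq\epsilon$.

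It remains to rule out $\epsilon'\neq\epsilon$, and this is where the degree computation of Theorem~\ref{thm1} is reused verbatim. Using that $A/I$ is irreducible with $x+I\neq0$ and that $e\notin I$, I would produce $y\in A$ with $xy\# e$ modulo $I$, whence $xy\notin I$ and $\delta(xy)=\delta(e)=\epsilon'$; since $G(A(\epsilon'))\subseteq I$ and $xy\in A(\epsilon')$, this forces $xy\notin G(A(\epsilon'))$. Because the ideal $\{r\in A(\epsilon')\mid A(\epsilon')rA(\epsilon')=0\}$ is contained in $G(A(\epsilon'))$, there is $z\in A(\epsilon')$ with $zxy\neq0$. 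Comparing the two bracketings of $\delta(zxy)$ and cancelling by regularity, as in Theorem~\ref{thm1}, yields first $\epsilon'\epsilon=\epsilon$ and then $\epsilon'=\epsilon$, the desired contradiction. Combining the two cases shows $x\in I$ for every maximal right ideal $I$ of $A$ with $\check{I}$ modular, hence $x\in G(A)$.

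The main obstacle is conceptual rather than computational: one must verify that the hypothesis on $\Delta$ is genuinely dispensable here, i.e.\ that the only ingredient borrowed from Theorem~\ref{thm} is its first statement, whose proof does not touch that hypothesis. Once this is granted, the only delicate point is the legitimacy of the degree cancellations at the end, which must be read off from the honest nonzero products $zxy$, $xy$ and $zx$ rather than performed formally in $\Delta$; regularity of $A$ supplies these cancellations in precisely the same manner as in the proof of Theorem~\ref{thm1}.
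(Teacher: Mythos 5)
Your proof is correct, and it rests on exactly the key observation that the paper uses: only the first statement of Theorem~\ref{thm} is needed, and that statement is proved without the hypothesis on $\Delta$. The overall skeleton is also the same --- reduce to showing $G(A(\epsilon))\subseteq I$ for each maximal right ideal $I$ with $\check{I}$ modular, split according to the degree $\epsilon'$ of $\check{I}$, and settle the case $\epsilon'=\epsilon$ by restricting $I$ to a maximal right ideal of $A(\epsilon)$ with $\check{I_\epsilon}$ modular. The one genuine divergence is the case $\epsilon'\neq\epsilon$: the paper disposes of it in a single step by showing that such an $I$ contains all of $A(\epsilon)$ --- for $x\in A(\epsilon)$ either $ex,x\in I$ or $ex\#x$ with $ex-x\in I$, and regularity forbids $0\neq ex\#x$ when $\delta(e)=\epsilon'\neq\epsilon$, so $x\in I$ in every case --- whereas you rerun the heavier machinery of Theorem~\ref{thm1}, producing $y$ with $xy\sim e\bmod I$, then $z\in A(\epsilon')$ with $zxy\neq0$, and performing two degree cancellations. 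Both routes work (your cancellations are no more delicate than those the paper itself performs in Theorem~\ref{thm1}, and you rightly flag that they must be anchored to the nonzero products $zxy$, $xy$, $zx$); the paper's shortcut buys brevity and avoids invoking irreducibility of $A/I$, while yours buys uniformity with the proof of Theorem~\ref{thm1} at the cost of extra bookkeeping.
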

\begin{proof}
Let $I$ be a maximal right ideal of $A$ such that $\check{I}$ is a modular ideal
of $A$ of degree $\epsilon.$ Also, let $e$ be a unity modulo $\check{I}.$ Then
$I$ is a maximal modular right ideal of $A$ with respect to $e.$ Therefore
for every $x\in A$ we either have that $ex,$ $x\in I$ or $ex\#x$ and $ex-x\in I.$
Let $x\in A(\phi),$ where $\phi$ is a nonzero idempotent element of $\Delta$
distinct from $\epsilon.$ Then, since $A$ is regular, we must have that $x\in I.$
Hence $I\cap A(\phi)=A(\phi).$ On the other hand, according to the first statement 
of Theorem~\ref{thm}, we have that $I_\epsilon=I\cap A(\epsilon)$ is a maximal 
right ideal of $A(\epsilon)$ such that $\check{I_\epsilon}$ is a modular ideal of 
$A(\epsilon)$ with respect to $e.$ Therefore, 
$G(A)\cap A(\epsilon)=\bigcap_{\check{I}\in\mathcal{M}}I\cap A(\epsilon)$
equals the intersection of maximal right ideals $I$ of $A,$ such that $\check{I}$
are modular ideals of $A$ of degree $\epsilon,$ with $A(\epsilon).$ Hence
$G(A(\epsilon))\subseteq G(A)\cap A(\epsilon).$
\end{proof}
\indent In case $R=\bigoplus_{\delta\in\Delta}R_\delta$ is a strongly graded ring
(see \cite{ak}) with unity, where $\Delta$ is a finite group with identity $\epsilon,$ 
it is known from \cite{jp} that $G(R_\epsilon)=G(R)\cap R_\epsilon,$ where $G$ 
denotes the classical Brown--McCoy radical of a ring. In the proof of this result, the 
fact that unity element of $R$ belongs to $R_\epsilon$ is essential.\\
\indent According to \cite{ha}, if $A$ is a regular anneid, and $\overline{A}$ its
linearization, then $\overline{A}$ is a graded ring with unity $1$ if and only if the
following hold:
\begin{itemize}
\item[$i)$] For every nonzero idempotent element $\epsilon\in\Delta,$ the ring
$A(\epsilon)$ is with unity $1_\epsilon;$
\item[$ii)$] For every $x\in A$ there exist idempotent elements $\epsilon,$ 
$\phi\in\Delta^*$ such that $1_\epsilon x=x=x1_\phi;$
\item[$iii)$] $\Delta^*$ contains only a finite number of idempotent elements.
\end{itemize}
Moreover, $1=\sum_\epsilon1_\epsilon.$ Therefore, if $A$ is a regular anneid
with unity $1,$ then $\Delta^*$ contains exactly one idempotent element
$\epsilon$ and $1\in A(\epsilon).$\\
\indent If $A$ is a regular anneid, by $J(A)$ we denote \emph{the graded Jacobson
radical} \cite{ha1} of $A.$
\begin{theorem}
Let $A$ be a regular anneid with unity such that $\overline{A}$ is strongly graded.
Also, let us assume that $J(A)\cap A(\delta)$ is properly contained in $A(\delta)$
for every $\delta\in\Delta^*.$ If $\epsilon$ is the idempotent element of 
$\Delta^*,$ then $G(A(\epsilon))=G(A)\cap A(\epsilon).$
\end{theorem}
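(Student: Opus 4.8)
\noindent The inclusion $G(A(\epsilon))\subseteq G(A)\cap A(\epsilon)$ is already available: it is Theorem~\ref{theoreminc}, whose proof uses only the first, unconditional statement of Theorem~\ref{thm}. So the plan is to prove the reverse inclusion $G(A)\cap A(\epsilon)\subseteq G(A(\epsilon))$. I will use two structural facts. Since $A$ is regular with unity and $\overline{A}$ is strongly graded, $\Delta^*$ is a group with identity $\epsilon$, $1\in A(\epsilon)$, and $A(\sigma)A(\tau)=A(\sigma\tau)$ for all $\sigma,\tau\in\Delta^*$, so in particular $A(\sigma)A(\sigma^{-1})=A(\epsilon)\ni1$. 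Moreover $\overline{G(A)}$ is a graded ideal, so $T:=G(A)\cap A(\epsilon)$ is a two-sided ideal of the ring $A(\epsilon)$. Because $A(\epsilon)$ has unity, its classical Brown--McCoy radical is $G(A(\epsilon))=\bigcap_M M$, where $M$ runs over all maximal two-sided ideals of $A(\epsilon)$; hence it suffices to prove $T\subseteq M$ for every such $M$.

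Fix a maximal ideal $M$ of $A(\epsilon)$ and form its $\Delta^*$-core $M^{\circ}=\bigcap_{\sigma\in\Delta^*}A(\sigma)\,M\,A(\sigma^{-1})$, the largest $\Delta^*$-invariant ideal of $A(\epsilon)$ contained in $M$ (the factor $\sigma=\epsilon$ shows $M^{\circ}\subseteq M$). Let $\mathfrak{I}$ be the ideal of $A$ generated by $M^{\circ}$; its linearization is $\overline{A}\,M^{\circ}\,\overline{A}$, and strong gradedness together with $A(\sigma)M^{\circ}A(\sigma^{-1})=M^{\circ}$ gives $\mathfrak{I}\cap A(\epsilon)=M^{\circ}$. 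The factor anneid $A/\mathfrak{I}$ has unity $1+\mathfrak{I}$ and, as a homomorphic image of a regular anneid, is regular. The key claim is that $A/\mathfrak{I}\in\mathcal{M}$; granting it, the theorem follows at once. Indeed, $A/\mathfrak{I}$ is then a regular simple anneid with unity, so lifting one of its maximal right ideals produces a maximal right ideal $I$ of $A$ with $\check{I}=\mathfrak{I}$, which therefore belongs to the defining family of $G(A)$. Consequently $G(A)\subseteq I$, and since $G(A)$ is two-sided it is contained in the largest two-sided ideal $\check{I}=\mathfrak{I}$ of $I$. Intersecting with $A(\epsilon)$ yields $T\subseteq\mathfrak{I}\cap A(\epsilon)=M^{\circ}\subseteq M$, as needed.

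Thus everything reduces to the single point $A/\mathfrak{I}\in\mathcal{M}$, i.e. to the $\Delta^*$-simplicity of $A(\epsilon)/M^{\circ}$ (equivalently, that $A/\mathfrak{I}$ has no nontrivial anneid ideal). This is the main obstacle, and it is exactly the gap between ordinary and graded simplicity flagged in the Remark after Theorem~\ref{thm1}. When $\Delta^*$ is finite the claim is classical: if a $\Delta^*$-invariant ideal $P$ strictly contains $M^{\circ}$ then $P\not\subseteq M$, so $P+M=A(\epsilon)$; applying the grading group gives $P+A(\sigma)MA(\sigma^{-1})=A(\epsilon)$ for every $\sigma$, and since distinct conjugates of $M$ are pairwise comaximal a Chinese-Remainder argument forces $P+M^{\circ}=A(\epsilon)$, whence $P=A(\epsilon)$. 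This is precisely the mechanism of \cite{jp}, where finiteness of the group is used exactly at the comaximality step. I expect the real work of the proof to be transferring this conclusion to infinite $\Delta^*$, and this is where the standing hypothesis is meant to intervene: the assumption that $J(A)\cap A(\delta)$ is proper in $A(\delta)$ for every $\delta\in\Delta^*$ ensures, via Halberstadt's identity $J(A(\epsilon))=J(A)\cap A(\epsilon)$ and its componentwise analogue, that no homogeneous component of $\overline{A}$ is swallowed by the graded Jacobson radical. The plan is to use this non-degeneracy to bound the $\Delta^*$-invariant ideals lying over $M^{\circ}$ even when the intersection defining $M^{\circ}$ is infinite, thereby recovering the $\Delta^*$-simplicity of $A(\epsilon)/M^{\circ}$ and completing the descent that \cite{jp} obtains from finiteness.
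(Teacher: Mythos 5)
Your overall strategy is the same as the paper's: both arguments reduce the hard inclusion $G(A)\cap A(\epsilon)\subseteq G(A(\epsilon))$ to the claim that for each maximal ideal $M$ of the unital ring $A(\epsilon)$ the core $M^{\circ}=\bigcap_{\sigma\in\Delta^*}A(\sigma)MA(\sigma^{-1})$ is a \emph{maximal} $\Delta^*$-invariant ideal, and both then pass through the one-to-one correspondence between maximal $\Delta^*$-invariant ideals of $A(\epsilon)$ and maximal ideals of $A$ (your formulation via $\mathfrak{I}$ and $A/\mathfrak{I}\in\mathcal{M}$ is just this correspondence in disguise, and that part of your reduction is correct). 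The problem is that you never prove the claim: you carry out the comaximality/Chinese-Remainder argument only when $\Delta^*$ is finite and then explicitly leave the general case as a ``plan.'' As submitted, the central step of the proof is missing, so the proposal is an outline rather than a proof. Moreover, the mechanism you propose for closing the gap is not the one the hypothesis is there for: in the paper the assumption that $J(A)\cap A(\delta)\subsetneq A(\delta)$ for all $\delta$ is used only to guarantee (via Halberstadt) that every $\delta\in\Delta^*$ has an inverse with $\delta\delta^{-1}=\delta^{-1}\delta=\epsilon$, i.e.\ that $\Delta^*$ is a group so that conjugation by the components makes sense at all; it plays no role in taming an infinite intersection of conjugates.

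For what it is worth, you have located the genuinely delicate point. The paper itself settles the maximality of $M^{\circ}$ by asserting that it goes ``just as in the proof of Proposition~2 in \cite{jp},'' and the comaximality argument there ($A(\epsilon)=\prod_i(P+M^{\sigma_i})\subseteq P+\bigcap_i M^{\sigma_i}$) is precisely the finite-group computation you reproduce. So your diagnosis of where the difficulty lies is accurate; but a correct review of your own text has to record that the decisive claim --- $\Delta^*$-simplicity of $A(\epsilon)/M^{\circ}$ for arbitrary $\Delta^*$ --- is asserted, not established, and that no workable substitute argument is supplied.
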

\begin{proof}
Since $A(\delta)\nsubseteq J(A)$ for every $\delta\in\Delta^*,$ it follows by
\cite{ha1,ha} that for every $\delta\in\Delta^*$ there exists a unique 
$\delta^{-1}\in\Delta^*$ such that $\delta\delta^{-1}=\epsilon.$ Since 
$\epsilon\delta=\delta\epsilon=\delta$ for every $\delta\in\Delta,$ and since
$A$ is regular, it follows easily that $\delta\delta^{-1}=\delta^{-1}\delta=\epsilon.$
Now, as in the case of group-graded rings, if $I$ is an ideal of $A,$ then 
$I_\epsilon=I\cap A(\epsilon)$ is an ideal of $A(\epsilon)$ and
$A(\delta)I_\epsilon A(\delta^{-1})=I_\epsilon$ for every $\delta\in\Delta^*.$ 
We refer to such ideals of $A(\epsilon)$ as $\Delta^*$-invariant. Also,
it can be proved that $I$ is an ideal of $A$ if and only if 
$I=AI_\epsilon=I_\epsilon A.$ 
If $I$ is an ideal of $A(\epsilon),$ then the mapping $I\mapsto IA$ defines
a one-to-one correspondence between the maximal $\Delta^*$-invariant ideals of 
$A(\epsilon)$ and the maximal ideals of $A.$ Since $A$ is with unity, if
$I$ is a maximal ideal of $A(\epsilon),$ then $A(\delta)IA(\delta^{-1})$ is a 
maximal ideal of $A(\epsilon)$ for every $\delta\in\Delta^*,$ and
$\bigcap_{\delta\in\Delta^*}A(\delta)IA(\delta^{-1})$ is a maximal
$\Delta^*$-invariant ideal of $A(\epsilon),$ just as in the proof of Proposition~2
in \cite{jp}. It follows that $G(A(\epsilon))$ equals the intersection of all maximal
$\Delta^*$-invariant ideals of $A(\epsilon).$ Therefore
$G(A(\epsilon))=G(A)\cap A(\epsilon).$
\end{proof}
\noindent Next we prove that the homogeneous part of the largest
homogeneous ideal contained in the classical Brown--McCoy radical
of a graded ring coincides with the large graded Brown--McCoy
radical of the corresponding anneid. For a similar result on the
Jacobson radical, see \cite{ha1,ha}.
\begin{theorem}\label{theorem}
Let $A$ be an anneid, $\overline{A}$ its linearization, and
$G(\overline{A})$ the classical Brown--McCoy radical of the ring
$\overline{A}.$ Then $G_l(A)=G(\overline{A})\cap A.$
\end{theorem}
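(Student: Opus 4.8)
The plan is to prove the two inclusions $G(\overline{A})\cap A\subseteq G_l(A)$ and $G_l(A)\subseteq G(\overline{A})\cap A$ separately, working throughout with the linearization. First I would record three reductions. (i) Both $G_l(A)$ and $G(\overline{A})\cap A$ consist of homogeneous elements; $G_l(A)$ is a two-sided ideal of $A$ (the annihilator of a right moduloid is two-sided), so $\overline{G_l(A)}=\bigoplus_\delta(G_l(A)\cap A(\delta))$ is a homogeneous ideal of $\overline{A}$, while $\bigoplus_\delta(G(\overline{A})\cap A(\delta))$ is exactly the largest homogeneous ideal of $\overline{A}$ contained in $G(\overline{A})$, with homogeneous part $G(\overline{A})\cap A$. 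Hence the theorem is equivalent to the equality of these two homogeneous ideals. (ii) For the linearization $\overline{M}$ of any moduloid $M$, the annihilator $(0:\overline{M})_{\overline{A}}$ is a homogeneous ideal whose homogeneous part is $(0:M)_A$, since the action extends linearly and $M$ is the homogeneous part of $\overline{M}$. (iii) I would invoke the classical module description of the Brown--McCoy radical (from \cite{ar}): $G(\overline{A})=\bigcap(0:\overline{N})$, where $\overline{N}$ runs over the simple right $\overline{A}$-modules satisfying the Brown--McCoy condition (for every ideal $\overline{I}$ of $\overline{A}$ with $\overline{N}\,\overline{I}\neq0$ there is $e\in\overline{I}$ with $xe=x$ for all $x$); equivalently, $G(\overline{A})$ is the intersection of the maximal modular two-sided ideals $P=\mathrm{Ann}(\overline{N})$, for which $\overline{A}/P$ is simple with unity.

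For $G(\overline{A})\cap A\subseteq G_l(A)$ I would fix a simple $A$-moduloid $M$ and set $I_0=G(\overline{A})\cap A$, an ideal of $A$ with $\overline{I_0}=\bigoplus_\delta(G(\overline{A})\cap A(\delta))\subseteq G(\overline{A})$. If $MI_0\neq0$, then Definition~\ref{definition} yields $b\in\overline{I_0}\subseteq G(\overline{A})$ with $xb=x$ for all $x\in M$, hence $yb=y$ for all $y\in\overline{M}$ by linear extension, where $\overline{M}\neq0$. But $b\in G(\overline{A})$ is $G$-regular (see \cite{gw}), so $b=bc-c+\sum_i(y_ibz_i-y_iz_i)$ for suitable $c,y_i,z_i\in\overline{A}$; substituting and using $yb=y$ together with $yy_i\in\overline{M}$ (so $yy_ib=yy_i$) gives $yb=0$ for every $y\in\overline{M}$, whence $\overline{M}=0$, a contradiction. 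Therefore $MI_0=0$, i.e. $I_0\subseteq(0:M)$, and intersecting over all simple moduloids $M$ gives $G(\overline{A})\cap A\subseteq G_l(A)$. This direction uses neither regularity nor simplicity of $\overline{M}$ as an ungraded module, only the $G$-regularity of elements of $G(\overline{A})$, which is why it goes through for an arbitrary anneid.

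For the reverse inclusion $G_l(A)\subseteq G(\overline{A})$ I would argue by contraposition: given a homogeneous $a\notin G(\overline{A})$, I must produce a simple $A$-moduloid $M$ with $Ma\neq0$, so that $a\notin(0:M)\supseteq G_l(A)$. By the ideal description there is a maximal modular two-sided ideal $P$ with $a\notin P$ and $S=\overline{A}/P$ simple with unity; writing $P=\check{L}$ for a suitable maximal modular right ideal $L$, I may pass to the largest homogeneous ideal $P^{*}=\bigoplus_\delta(P\cap A(\delta))$ contained in $P$, obtaining a graded ring $B=\overline{A}/P^{*}$ carrying a graded-trivial ideal $Q=P/P^{*}$ (no nonzero homogeneous elements) with $B/Q\cong S$, in which the image of $a$ is a nonzero homogeneous element. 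The intended construction is to form the moduloid $A/(L\cap A)$, then (via Zorn's lemma applied to homogeneous right ideals) pass to a maximal homogeneous quotient still detecting $a$, and finally to transport the Brown--McCoy identity condition from the unity of $S$ to this moduloid so that it becomes simple in the sense of Definition~\ref{definition}.

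The hard part will be exactly this last construction. The obstruction is that a simple ungraded $\overline{A}$-module, the maximal right ideal $L$, and the unity of $S$ need not be homogeneous, so neither the maximality needed for irreducibility of the moduloid nor the existence of a homogeneous element acting as an identity (as demanded by Definition~\ref{definition}) is automatic; in particular a maximal homogeneous right ideal avoiding a fixed element of $\overline{A}a$ need not be maximal among all homogeneous right ideals. The crux is therefore to exploit the fact that $Q$ contains no nonzero homogeneous elements while $B/Q$ is simple with unity in order to show that the homogeneous part $A/(P\cap A)$ of $B$ admits a simple moduloid $M$ whose linearization $\overline{M}$ inherits an identity from the unity of $S$ and on which $a$ acts as a nonzero operator. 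Granting this, $a\notin G_l(A)$, which completes the reverse inclusion and, together with the first one, yields $\overline{G_l(A)}=\bigoplus_\delta(G(\overline{A})\cap A(\delta))$, that is, $G_l(A)=G(\overline{A})\cap A$.
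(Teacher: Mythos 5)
Your first inclusion, $G(\overline{A})\cap A\subseteq G_l(A)$, is essentially the paper's own argument (the paper runs it elementwise with the ideal $\langle a\rangle$ rather than with $I_0$, but the mechanism --- obtain $b\in\overline{I}\subseteq G(\overline{A})$ acting as a right identity on $M$, then kill it using the $G$-regularity representation $b=bw-w+\sum_i y_i(bz_i-z_i)$ --- is the same). The problem is the other inclusion, where you have a genuine gap that you yourself flag as ``the hard part'': your contrapositive strategy requires manufacturing, from a non-homogeneous maximal modular ideal $P$ of $\overline{A}$, a genuinely graded simple $A$-moduloid detecting $a$, and the obstructions you list (non-homogeneity of $L$ and of the unity of $S$, failure of maximality among homogeneous right ideals) are real; the sketch never overcomes them, so as written the proof is incomplete.

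The missing observation is that no such construction is needed, because $G_l(A)$ is by definition the intersection of the annihilators of \emph{all} simple right $A$-moduloids, with no regularity and no nontriviality requirement on the grading of the moduloid. Any simple right $\overline{A}$-module $\overline{M}$ (in the Brown--McCoy sense of \cite{ar}) becomes a right $A$-moduloid by giving it the trivial grading (the index set $D$ a singleton), and it is then simple in the sense of Definition~\ref{definition}: if $I$ is an ideal of $A$ with $\overline{M}I\neq0$, then $\overline{M}\,\overline{I}\neq0$, and the Brown--McCoy property of $\overline{M}$ supplies $b\in\overline{I}$ with $xb=x$ for all $x\in\overline{M}$ --- note that Definition~\ref{definition} only asks that $b$ lie in $\overline{I}$, not that it be homogeneous in $M$ or in $\overline{A}$. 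Hence $G_l(A)\subseteq(0:\overline{M})_A\subseteq(0:\overline{M})_{\overline{A}}$ for every such $\overline{M}$, and intersecting gives $G_l(A)\subseteq G(\overline{A})\cap A$ immediately. This is the paper's route; your contraposition attack is not salvageable as written without this (or an equivalent) idea.
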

\begin{proof}
Let $\overline{M}$ be a simple right $\overline{A}$-module. Then
$\overline{M}$ may be viewed as a simple right $A$-moduloid.
Indeed, $\overline{M}$ is obviously a right $A$-moduloid and
$\overline{M}A\neq0.$ Also, let $I$ be an ideal of $A$ such that
$\overline{M}I\neq0.$ Suppose that for all $a\in\overline{I}$
there exists $y\in\overline{M}$ such that $ya\neq y.$ On the other
hand, $\overline{M}\,\overline{I}\neq0$ and so there exists
$b\in\overline{I}$ such that $xb=x$ for every $x\in\overline{M}.$
In particular, $yb=y,$ a contradiction. Hence $\overline{M}$ is a
simple right $A$-moduloid. Also, it is obvious that
$(0:\overline{M})_A\subseteq(0:\overline{M})_{\overline{A}},$ that
is, the annihilator of a right $A$-moduloid $\overline{M}$ is
contained in the annihilator of a right $\overline{A}$-module
$\overline{M}.$ Since the classical Brown--McCoy radical of a ring
equals the intersection of annihilators of all simple right
modules over that ring \cite{ar}, we have $G_l(A)\subseteq
G(\overline{A}).$ Therefore
$G_l(A)\subseteq G(\overline{A})\cap A.$\\
Conversely, let $a\in G(\overline{A})\cap A$ and let $M$ be an
arbitrary simple right $A$-moduloid. We claim that $a\in(0:M).$
Suppose $a\notin(0:M).$ Then there exists $x\in M$ such that
$xa\neq0.$ This implies $M\langle a\rangle\neq0.$ Since $M$ is
simple, there exists $b\in\overline{\langle a\rangle}$ such that
$mb=m$ for all $m\in M.$ Particularly, $xb=x.$ For all
$\bar{a}\in\overline{A},$ we have $x\bar{a}=xb\bar{a},$ which
implies $\bar{a}-b\bar{a}\in(0:x).$ Since $b\in\overline{\langle
a\rangle},$ and $a\in G(\overline{A}),$ we have $b\in
G(\overline{A}).$ Therefore there exist $w,$ $y_i$
$z_i\in\overline{A}$ and a natural number $n$ such that
$b=bw-w+\sum_{i=1}^ny_i(bz_i-z_i)$ (see, for instance,
\cite[Theorem 4.8.2]{gw}). However, as we have seen, for all
$\bar{a}\in\overline{A},$ we have $\bar{a}-b\bar{a}\in(0:x),$ that
is, $b\bar{a}-\bar{a}\in(0:x).$ Particularly, $bw-w,$
$bz_i-z_i\in(0:x),$ and therefore $b\in(0:x),$ which implies
$0=xb=x,$ a contradiction. Hence $a\in(0:M),$ and so
$G(\overline{A})\cap A\subseteq G_l(A).$ Therefore
$G_l(A)=G(\overline{A})\cap A.$
\end{proof}
\begin{remark}
Let $A$ be a regular anneid. Since $G_l(A)\subseteq G(A),$ the
previous theorem implies that $\bigoplus_{\delta\in
\Delta^*}G(\overline{A})\cap A(\delta)\subseteq\overline{G(A)},$
that is, the largest homogeneous ideal contained in the classical
Brown--McCoy radical of the ring $\overline{A}$ is contained in
the graded Brown--McCoy radical of $\overline{A}$ (contrast with
\cite[Proposition 3.5]{bs}).
\end{remark}
\noindent If $R=\bigoplus_{s\in S}R_s$ is a group-graded ring, and
if $e$ is the neutral element of $S,$ then we know from \cite{grz}
that the Brown--McCoy radical of $R_e$ is contained in the
Brown--McCoy radical of $R.$ Here we have the following result.
\begin{theorem}
Let $A$ be a regular anneid, $\overline{A}$ its linearization, and
$\epsilon$ an idempotent element of $\Delta^*.$ If $G(A)=G_l(A),$
then the classical Brown--McCoy radical $G(A(\epsilon))$ of
$A(\epsilon)$ is contained in the classical Brown--McCoy radical
$G(\overline{A})$ of $\overline{A}.$
\end{theorem}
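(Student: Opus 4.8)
The plan is to obtain the stated inclusion purely by chaining together three facts already available: the (unconditional) first statement of Theorem~\ref{thm} as packaged in Theorem~\ref{theoreminc}, the hypothesis $G(A)=G_l(A)$, and the identification of the large graded Brown--McCoy radical in Theorem~\ref{theorem}. No new structural analysis of $\Delta$ should be required, since every ingredient is in place.

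First I would invoke Theorem~\ref{theoreminc}. Because $\epsilon\in\Delta^*$ is idempotent, it is in particular a nonzero idempotent element of $\Delta$, so the hypotheses of that theorem are met; it follows at once that $G(A(\epsilon))\subseteq G(A)\cap A(\epsilon).$ It is worth recalling here that Theorem~\ref{theoreminc} was established without the additional assumption on $\Delta$ (it rests only on the first statement of Theorem~\ref{thm}), so this step is unconditional and costs nothing.

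Next I would apply the hypothesis $G(A)=G_l(A)$ to rewrite $G(A)\cap A(\epsilon)=G_l(A)\cap A(\epsilon),$ and then appeal to Theorem~\ref{theorem}, which gives $G_l(A)=G(\overline{A})\cap A.$ Since $A(\epsilon)\subseteq A,$ intersecting with $A(\epsilon)$ simplifies to $G_l(A)\cap A(\epsilon)=G(\overline{A})\cap A(\epsilon)\subseteq G(\overline{A}).$ Stringing the three steps together yields $G(A(\epsilon))\subseteq G(A)\cap A(\epsilon)=G_l(A)\cap A(\epsilon)=G(\overline{A})\cap A(\epsilon)\subseteq G(\overline{A}),$ which is exactly the assertion.

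Since the argument is only a composition of prior inclusions, I do not anticipate a genuine obstacle. The sole point demanding attention is bookkeeping of the ambient objects: $G(A(\epsilon))$ is the classical radical of the ring $A(\epsilon),$ whereas $G(A)$ and $G_l(A)$ are ideals of the anneid $A,$ and $G(\overline{A})$ is an ideal of the ring $\overline{A}$ whose homogeneous part is $A.$ Keeping straight that $A(\epsilon)\subseteq A\subseteq\overline{A}$ ensures each intersection is meaningful and that the containments indeed compose as written.
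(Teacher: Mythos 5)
Your proposal is correct and follows essentially the same route as the paper: apply Theorem~\ref{theoreminc} to get $G(A(\epsilon))\subseteq G(A)\cap A(\epsilon)$, use the hypothesis $G(A)=G_l(A)$, and conclude via Theorem~\ref{theorem}'s identity $G_l(A)=G(\overline{A})\cap A$. Your added remark that Theorem~\ref{theoreminc} holds without the extra assumption on $\Delta$ is accurate and consistent with the paper's own discussion.
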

\begin{proof}
Since by assumption, $G(A)=G_l(A),$ and since by Theorem
\ref{theoreminc}, $G(A(\epsilon))\subseteq G(A)\cap A(\epsilon),$ we have
$G(A(\epsilon))\subseteq G_l(A).$ On the other hand, Theorem
\ref{theorem} tells us that $G_l(A)=G(\overline{A})\cap A,$ and so
$G(A(\epsilon))\subseteq G(\overline{A}).$
\end{proof}

\noindent Emil Ili\'{c}-Georgijevi\'{c}\\
University of Sarajevo\\
Faculty of Civil Engineering\\
Patriotske lige 30, 71000 Sarajevo\\
Bosnia and Herzegovina\\
e-mail: emil.ilic.georgijevic@gmail.com
\end{document}